\newcommand{\R}{\mathord{\mathbb{R}}}
\newcommand{\C}{\mathord{\mathbb{C}}}
\newcommand{\N}{\mathord{\mathbb{N}}}
\newcommand{\I}{\mathord{\mathrm{i}}}
\newcommand{\1}{\mathord{\mathds{1}}}
\renewcommand{\Re}{\mathrm{Re}\,}
\renewcommand{\coloneq}{~\raise.35pt\hbox{:}\kern-3.5pt=}
\renewcommand{\eqcolon}{=\kern-3.5pt\raise.35pt\hbox{:}~}
\theoremstyle{plain}
\newtheorem{theorem}{Theorem}[section]
\newtheorem{corollary}[theorem]{Corollary}
\newtheorem{proposition}[theorem]{Proposition}
\newtheorem{lemma}[theorem]{Lemma}
\newtheorem*{theorem*}{Theorem}
\newtheorem*{conjecture*}{Conjecture}
\theoremstyle{definition}
\newtheorem{definition}[theorem]{Definition}
\newtheorem{remark}[theorem]{Remark}
\newtheorem*{remarks*}{Remarks}
\providecommand{\mr}[1]{\href{http://www.ams.org/mathscinet-getitem?mr=#1}{MR~#1}}
\providecommand{\zbl}[1]{\href{https://zbmath.org/?q=an:#1}{Zbl~#1}}
\title[Solitary waves in nonlocal NLS with saturated nonlinearities]
      {Solitary waves in nonlocal  NLS with dispersion averaged saturated nonlinearities}
\author[D. Hundertmark, Y.-R. Lee, T. Ried, V. Zharnitsky]{}
\subjclass[2010]{Primary: $35Q55$; Secondary: $35Q50$.}
\keywords{Saturated Nonlinearities, Nonlocal NLS, Solitary Waves, Nonlocal Variational Problems}
\email{\url{dirk.hundertmark@kit.edu}}
\email{\url{younglee@sogang.ac.kr}}
\email{\url{tobias.ried@kit.edu}}
\email{\url{vz@math.uiuc.edu}}
\thanks{\copyright~2017 by the authors. Faithful reproduction of this article, in its entirety, by any means is permitted for non-commercial purposes}
\begin{document}
\maketitle

\centerline{\scshape Dirk Hundertmark}
\medskip
{\footnotesize
 \centerline{Institute for Analysis, Karlsruhe Institute of Technology (KIT)}
 \centerline{Englerstra{\ss}e 2, 76131 Karlsruhe, Germany}
}

\medskip

\centerline{\scshape Young-Ran Lee
}
\medskip
{\footnotesize
 \centerline{Department of Mathematics, Sogang University}
 \centerline{35 Baekbeom-ro, Mapo-gu, Seoul 04107, South Korea}
}

\medskip

\centerline{\scshape Tobias Ried
}
\medskip
{\footnotesize
 \centerline{Institute for Analysis, Karlsruhe Institute of Technology (KIT)}
 \centerline{Englerstra{\ss}e 2, 76131 Karlsruhe, Germany}
}

\medskip

\centerline{\scshape Vadim Zharnitsky
}
\medskip
{\footnotesize
 \centerline{Department of Mathematics, University of Illinois at Urbana-Champaign}
 \centerline{1409 W. Green Street, Urbana, Illinois 61801-2975, USA}
}

\bigskip

\begin{abstract}

A nonlinear Schr\" odinger equation (NLS) with dispersion averaged  nonlinearity of saturated type is considered.
Such a nonlocal NLS is of integro-differential type and it arises naturally in modeling  fiber-optics communication  systems with periodically varying dispersion profile (dispersion management).
The associated constrained variational principle is shown to posses a ground state solution by constructing a convergent minimizing sequence through the application of a method  similar to the classical concentration compactness principle of Lions. One of the obstacles  in applying this variational approach is  that a saturated nonlocal nonlinearity does not satisfy uniformly  the so-called strict sub-additivity condition. This is overcome by applying a special version of Ekeland's variational principle.
\end{abstract}

\setcounter{tocdepth}{1}
\tableofcontents
\section{Introduction and main results}\label{sec:introduction}

We consider a nonlocal variational problem  related to an averaged nonlinear Schr\"odinger equation as it  arises in the context of fiber optical communication systems  with periodically varying dispersion \emph{(dispersion management)}.  After application of an averaging approximation, the nonlinearity  is averaged over the dispersive action which leads to a nonlocal  problem.
For Kerr type  nonlinearities, solitary waves have been constructed in \cite{GT96,AB98,ZGJT01,Kun04,HL12}, and for more general nonlinearities in \cite{CHL15}.
The main novelty of this article is that saturating nonlinearities are allowed, which poses significant problems in proving the existence of ground state solutions in the nonlocal variational problem.
On a more technical level, we would like to emphasize that we impose weaker than usual differentiability properties on the energy functional defined below, more precisely, we do not assume the energy functional to be continuously differentiable.

While  in the local case a saturable  nonlinearity is often helpful by  creating  more favorable conditions for the existence of ground states, e.g. by arresting collapse in the supercritical regime, in the nonlocal case saturation presents difficulties in satisfying sub-additvity condition.

From a physics viewpoint, saturated nonlinearities are relevant  in modeling optical waves in nonlinear materials as the Kerr nonlinearity, which is cubic for low intensities, saturates
for large fields and approaches a regime with constant refraction index. One needs to assume some form of saturation of the nonlinearity $P(u)=p(|u|)u$ with the most common law being
\[
p(|u|) = \frac{|u|^2}{1+ \sigma |u|^2},
\]
where $p$ corresponds to the intensity dependent refraction coefficient.
In such models, the corresponding term in the Hamiltonian functional,  which we call {\em nonlinearity potential}, is given by
\[
V(a)  = \frac{a^2}{2\sigma} - \frac{1}{2\sigma^2}\log(1+ \sigma a^2),
\]
where $V'(a) = P(a)$.
Another natural modification with similar behavior is given by the nonlinearity potential
$
V(a) = a^4/(1+  \sigma a^2).
$
We will actually consider a much broader class of saturated nonlinearity potentials which include the above two as very special cases, but first we discuss the local saturated NLS.

The presence of solitary waves in local NLS with saturated nonlinearity has been addressed in several studies, see e.g.  \textsc{Gatz-Herrmann} \cite{GH91},
\textsc{Usman-Osman-Tilley} \cite{UOT98} and references therein.
Their results show  that solitary wave solutions can be obtained numerically and sometimes analytically using phase space analysis and  one may also observe two-state solitons. In particular, one may observe  the bi-stability phenomenon.

 In this paper, we address the question of existence of at least one solitary wave, for nonlocal NLS, which turns out already a challenging task as we cannot construct solitons by
 using phase space analysis as in the local case.

 As we employ variational methods, the obtained solitary wave is automatically  a ground state solution. It is anticipated that  multiple solitary waves may also exist in nonlocal case, but one needs to use different methods to address this question.

Saturable nonlinearities have also been considered in the context of coupled local nonlinear Schr\"odinger systems, where the existence of stationary solutions was also proved by variational and bifurcation techniques, see \cite{dAMP13,JT02,Man16}. \\

The direct application of the approach of \cite{CHL15} to saturating nonlinearities does not work because of the lack of strict sub-additivity of the energy in this case. The main idea is to construct a modified minimizing sequence with a uniform $L^{\infty}$ bound, which prevents the minimizing sequence from reaching the saturation regime.

The problem we are addressing in the work at hand is related to the existence of breather-type solutions of the dispersion managed one-dimensional nonlinear Schr\" odinger equation
\begin{align}\label{eq:NLS-gen}
	\mathrm{i} \partial_t u = -d(t) \, \partial_x^2 u - p(|u|)u,
\end{align}
where the dispersion $d(t) = \epsilon^{-1} d_0(t/\epsilon) + d_{\mathrm{av}}$ is parametrically modulated. The constant  part of the group velocity dispersion $d_{\mathrm{av}}$,  assumed to be nonnegative $d_{\mathrm{av}} \geq 0$, denotes the average component (residual dispersion) and $d_0$ its $L$-periodic mean zero part, the most basic example being $d_0 = \1_{[0,1)} - \1_{[1, 2)}$ for $L=2$. $P(u)=p(|u|)u$ is the nonlinear interaction due to the polarizability of the optical fiber.

Let $T_r = \mathrm{e}^{\I r \partial_{x}^2}$ be the free Schr\"odinger evolution in one space dimension and write $u = T_{D(t/\epsilon)} v$, with $D(t) \coloneq \int_0^t d_0(r)\,\mathrm{d}r$. Then \eqref{eq:NLS-gen} is equivalent to
\begin{align*}
	\mathrm{i} \partial_t v = -d_{\mathrm{av}} \, \partial_x^2 v - T^{-1}_{D(t/\epsilon)} \left[ P(T_{D(t/\epsilon)} v) \right].
\end{align*}
In the limit of small $\epsilon$, and averaging over the fast dispersion action, one obtains an averaged dispersion managed nonlinear Schr\"odinger equation\footnote{The solutions of the averaged equation and of the original one turn out to be $\epsilon$ close on the time scale of order $\epsilon^{-1}$.  It can be shown by developing an appropriate averaging theory similar to \cite{ZGJT01}  but we do not pursue this direction here.},
\begin{align*}
\mathrm{i} \partial_{\tau} v &= -d_{\mathrm{av}} \, \partial_x^2 v -  \frac{1}{L} \int_0^L T^{-1}_{D(r)} \left[ P(T_{D(r)} v)  \right] \,\mathrm{d}r \\
	&= -d_{\mathrm{av}} \, \partial_x^2 v - \int_{\R} T^{-1}_r \left[ P(T_r v)  \right] \,\mu(\mathrm{d}r),
\end{align*}
where $\mu$ is the image of the uniform measure on $[0,L]$ under $D$.  In the model example $d_0 = \1_{[0,1)} - \1_{[1, 2)}$ from above, the measure $\mu$ has a density $\psi = \1_{[0,1]}$ with respect to Lebesgue measure on $\R$.
More generally, under physically reasonable assumptions, the probability measure $\mu$ has compact support and is absolutely continuous with respect to Lebesgue measure, with density $\psi$ in suitable $L^p$ spaces, see \cite[Lemma 1.4]{HL12} for details.

Standing wave solutions of the averaged DM NLS of the form $v(t,x) = \mathrm{e}^{-\mathrm{i}\omega t} f(x)$ are solutions of the nonlinear and nonlocal eigenvalue equation (Dispersion management equation, \textsc{Gabitov-Turitsyn} \cite{GT96})
	\begin{align}\label{eq:DMequation}
		\omega f = - d_{\mathrm{av}} \, f'' - \int_{\R} T^{-1}_r \left[ P(T_r f)  \right] \,\mu(\mathrm{d}r)
	\end{align}
The DM equation \eqref{eq:DMequation} is of variational type  as it can be considered as an Euler-Lagrange equation for an appropriate variational principle and weak solutions can be found as stationary points of a suitable energy functional, see \eqref{eq:varprob}.

\subsection{The variational problems}
We study the existence of minimizers for the nonlinear and nonlocal variational problems
\begin{align}\label{eq:varprob}
	E_{\lambda}^{d_{\mathrm{av}}} \coloneq \inf_{f\in\mathcal{S}_{\lambda}^{d_{\mathrm{av}}}} H(f),
\end{align}
where
\begin{align*}
\mathcal{S}_{\lambda}^{d_{\mathrm{av}}} &= \left\{u\in H^1(\R;\C): \|u\|_2^2 = \lambda\right\}  \quad \text{for}\quad \lambda>0,\, d_{\mathrm{av}} > 0,\\
\mathcal{S}_{\lambda}^{0} &= \left\{u\in L^2(\R;\C): \|u\|_2^2 = \lambda\right\} \ \quad \text{for}\quad \lambda>0,\, d_{\mathrm{av}} = 0, 	
\end{align*}
where $L^2(\R;\C)$, respectively $H^1(\R;\C)$ are the standard $L^2$, respectively Sobolev spaces for complex-valued functions. We will denote the usual inner product on $L^2(\R;\C)$ by $\langle f, g \rangle = \int_{\R} \overline{f} g\,\mathrm{d}x$, but consider $L^2(\R;\C)$ as \emph{real} Hilbert space with the inner product $\mathrm{Re}\langle\cdot, \cdot\rangle$, which induces the same topology on $L^2(\R;\C)$. $H^1(\R;\C)$ will be equipped with the corresponding $L^2$-inner product.
We will also use the standard $L^p$ norms denoted by $\|\cdot\|_p$.
The energy functional is given by
\begin{align}\label{eq:hamiltonian}
H(f) \coloneq \frac{d_{\mathrm{av}}}{2} \|f'\|_2^2 - N(f).
\end{align}
for $f\in \mathcal{H}^{d_\mathrm{av}}$, where we set $\mathcal{H}^{d_\mathrm{av}}= H^1(\R;\C)$ if $d_{\mathrm{av}}>0$ and $\mathcal{H}^0=L^2(\R;\C)$ for convenience.
The nonlocal nonlinearity is given by
\begin{align}\label{eq:nonlinearity}
N(f) \coloneq \iint_{\R^2} V(|T_r f(x)|) \, \mathrm{d}x\, \psi(r)\,\mathrm{d}r
\end{align}
for some suitable nonlinear potential $V: [0, \infty) \to [0, \infty)$. Here,  the function $\psi$ is the density of some compactly supported probability measure and will be assumed to lie in appropriate $L^p$ spaces.

We assume that $V$ satisfies the following assumptions:
\begin{enumerate}[label=(\textbf{A\arabic*})]
\item \label{ass:growth} $V$ is continuous on $[0,\infty)$ and continuously differentiable on $(0,\infty)$ with $V(0)=0$. There exist $2 \leq \gamma_1 \leq \gamma_2 < \infty$ such that
	\begin{align*}
		|V'(a)|\lesssim a^{\gamma_1-1} + a^{\gamma_2-1} \quad \text{for all} \quad a>0.
	\end{align*}
\item \label{ass:saturation} There exists a continuous function $\kappa: [0, \infty) \to [2, \infty)$ with $\kappa > 2$ on \emph{compact} intervals, such that for all $a>0$,
	\begin{align*}
		V'(a)a \geq \kappa(a) V(a).
	\end{align*}
\item \label{ass:strictpos} There exists $a_*>0$ such that $V(a_*)>0$.
\end{enumerate}

\begin{remark}
Assumption \ref{ass:saturation} allows for \emph{saturation} of the potential $V$ in the sense that $\frac{V'(a)a}{V(a)} \to 2$ as $a\to \infty$. This is in contrast to the typically assumed Ambrosetti-Rabinowitz condition \cite{AR73}
	\begin{align}\label{eq:AR}
		V'(a) a \geq \kappa V(a) \quad \text{for all} \quad a>0
	\end{align}
with $\kappa>2$, which fails in the limit $a\to\infty$ for the saturated nonlinearities. In \cite{CHL15}, the Ambrosetti-Rabinowitz condition \eqref{eq:AR} was crucial in proving strict sub-additivity of the variational problem.
\end{remark}

Under assumptions \ref{ass:growth}--\ref{ass:strictpos} with appropriate restrictions on $\gamma_1, \gamma_2$, we can show that there exists a threshold for the existence of minimizers. Under the additional assumptions
\begin{enumerate}[label=(\textbf{A\arabic*}), start=4]
\item \label{ass:zerothreshold}
	\begin{enumerate}[leftmargin=10ex]
	\item[$d_{\mathrm{av}}=0$:] There exists $\epsilon>0$ such that $V(a)>0$ for all $0<a\leq \epsilon$.
	\item[$d_{\mathrm{av}}>0$:] There exist $\epsilon>0$ and $2<\gamma_0<6$ such that $V(a)\gtrsim a^{\gamma_0}$ for all $0<a\leq \epsilon$.
	\end{enumerate}
\end{enumerate}
on $V$, minimizers are shown to exist for any $\lambda>0$.

\begin{remark}\label{ass:lipschitz}
If we assume that there exists $\gamma\geq 2$ such that
\begin{align}\label{eq:lipschitz}
	|V'(|z+w|) - V'(|z|)| \lesssim (|w| + |z|)^{\gamma-2} \, |w| \quad \text{for all} \quad w,z \in \C,
\end{align}
the nonlinearity $N: \mathcal{H}^{d_{\mathrm{av}}} \to \R$ is actually a $\mathcal{C}^1$ functional, see Proposition \ref{prop:diff-nonlin}.
	We can work with directional derivatives only though, including the construction of the modified minimizing sequence, so this assumption is not needed for our main theorems.
\end{remark}

\begin{remark}
	If no further restriction is made, we shall assume throughout that the function $\psi$ is a compactly supported, non-negative, and integrable function. Moreover, from \cite[Lemma 1.4]{HL12} we infer that if the periodic dispersion profile $d_0$ changes sign finitely many times and $\int_0^L |d_0(s)|^{1-p}\, \mathrm{d}s <\infty$ for some $p\ge 1$, then the probability density $\psi\in L^p$ and has compact support, so the $L^p$ conditions on $\psi$ in our existence theorems are very reasonable assumptions in view of the applications of our results to nonlinear optics.
\end{remark}

\subsection{Main results}

\begin{theorem}[Existence of DM solitons for zero average dispersion]\label{thm:zero}
Let $d_{\mathrm{av}} = 0$.
Assume $V$ satisfies the conditions \ref{ass:growth}--\ref{ass:strictpos}, with $3\leq \gamma_1\leq \gamma_2 <5$. Assume further that $\psi\geq 0$ is compactly supported and $\psi\in L^{\frac{4}{5-\gamma_2} + \delta}$ for some $\delta >0$.

Then there exists a threshold $\lambda_{\mathrm{cr}}^0 \geq 0$ such that
\begin{enumerate}
	\item if $0<\lambda<\lambda_{\mathrm{cr}}^0$, then $E_{\lambda}^0=0$,
	\item if $\lambda>\lambda_{\mathrm{cr}}^0$, then $-\infty<E_{\lambda}^0<0$ and there exists a minimizer $u\in \mathcal{S}_{\lambda}^{0}\cap L^{\infty}$ of the variational problem \eqref{eq:varprob}. This minimizer is a weak solution of the dispersion management equation
	\begin{equation}\label{eq:DM}
 		\omega f = -\int_{\R} T_r^{-1} \left[ V'(|T_r f|) \tfrac{T_r f}{|T_r f|} \right]\,  \psi\,\mathrm{d}r.
	\end{equation}
	for some Lagrange multiplier $\omega < \frac{2 E_{\lambda}^0}{\lambda} < 0$.
\end{enumerate}

If, in addition, assumption \ref{ass:zerothreshold} holds, then $\lambda_{\mathrm{cr}}^0 = 0$.
\end{theorem}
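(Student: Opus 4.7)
The strategy is to apply a concentration-compactness argument, with a twist to handle the failure of strict sub-additivity caused by saturation. First, I would verify that $-\infty < E_\lambda^0 \leq 0$: the upper bound $E_\lambda^0 \leq 0$ follows by testing $H$ on widely spread profiles with fixed $L^2$-mass $\lambda$ (since $V(0)=0$ and the $\gamma_1 \geq 3 > 2$ growth of $V$ near the origin makes $N$ vanish in the spread-out limit), while the lower bound combines a Strichartz-type estimate for the free evolution $T_r$ with the growth condition \ref{ass:growth} and the $L^p$-integrability of $\psi$ for $p = 4/(5-\gamma_2) + \delta$. Next, the scaling $f \mapsto \sqrt{t}\, f$ together with \ref{ass:saturation} shows that $N(\sqrt{t}\, u) \geq t N(u)$ for $t \geq 1$, hence $E_{t\lambda}^0 \leq t E_\lambda^0$, so $\lambda \mapsto E_\lambda^0 / \lambda$ is non-increasing on $(0,\infty)$. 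Defining $\lambda_{\mathrm{cr}}^0 := \inf\{\lambda>0: E_\lambda^0 < 0\}$ then yields $E_\lambda^0 = 0$ for $\lambda < \lambda_{\mathrm{cr}}^0$ and $E_\lambda^0 < 0$ for $\lambda > \lambda_{\mathrm{cr}}^0$.

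The central difficulty is that the Ambrosetti-Rabinowitz condition fails in the saturation limit $a \to \infty$, so strict sub-additivity $E_{\lambda_1+\lambda_2}^0 < E_{\lambda_1}^0 + E_{\lambda_2}^0$, the usual input for excluding dichotomy, is not available uniformly. To bypass this, the key step is to construct a minimizing sequence $(u_n) \subset \mathcal{S}_\lambda^0$ with $H(u_n) \to E_\lambda^0$ and a uniform $L^\infty$-bound $\|u_n\|_\infty \leq M$, so that only the sub-saturation regime is probed, where \ref{ass:saturation} provides $\kappa_M := \min_{[0,M]} \kappa > 2$. I would achieve this by applying Ekeland's variational principle to a penalized functional enforcing the $L^\infty$-bound, yielding an almost-critical sequence for a modified problem whose original energy still converges to $E_\lambda^0$.

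With the $L^\infty$-bound in hand, strict sub-additivity holds within the class of profiles satisfying $\|u\|_\infty \leq M$, and I would apply Lions' concentration-compactness to $|u_n|^2$ with total mass $\lambda$. Vanishing is excluded because $N(u_n) \to -E_\lambda^0 > 0$ forces, via the Strichartz-type bound, a nontrivial amount of $L^p$-mass of $u_n$ to survive on bounded windows; dichotomy is excluded via the restricted strict sub-additivity applied to the split profiles, each of which inherits the $L^\infty$-bound. This produces translates $u_n(\cdot - y_n)$ converging strongly in $L^2$ to a candidate $u \in \mathcal{S}_\lambda^0 \cap L^\infty$, and dominated convergence on the $L^\infty$-bounded nonlinearity gives $N(u_n) \to N(u)$, so $u$ is a minimizer.

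The Euler-Lagrange equation follows from the directional differentiability of $N$ on $\mathcal{H}^0$, giving \eqref{eq:DM} with Lagrange multiplier $\omega$. Testing this equation against $u$ and using \ref{ass:saturation} with $\kappa > 2$ on the compact range of $|T_r u|$ yields $-\omega \lambda = \langle N'(u), u \rangle \geq \kappa_{\min}\, N(u) > 2N(u) = -2E_\lambda^0$, hence $\omega < 2E_\lambda^0/\lambda < 0$. Under \ref{ass:zerothreshold}, the positivity of $V$ near the origin permits, for any $\lambda > 0$, a test profile of mass $\lambda$ with amplitude kept below $\varepsilon$ on a set large enough that $|T_r f|$ remains in $(0,\varepsilon)$ uniformly in $r$, forcing $N(f) > 0$ and hence $E_\lambda^0 < 0$, so $\lambda_{\mathrm{cr}}^0 = 0$. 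The hard part, as anticipated in the introduction, is the Ekeland construction of the bounded minimizing sequence and the verification that restricted strict sub-additivity survives the truncation well enough to exclude dichotomy.
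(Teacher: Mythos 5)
Your overall skeleton (well-posedness, threshold via monotonicity, an $L^\infty$-bounded minimizing sequence to restore strict sub-additivity, concentration compactness, Euler--Lagrange) matches the paper, but the one step that carries the whole argument --- \emph{how} the uniformly bounded minimizing sequence is actually produced --- is where your proposal has a genuine gap. You propose to apply Ekeland's principle to ``a penalized functional enforcing the $L^\infty$-bound.'' This is problematic for two reasons. First, the quantity that must be controlled for Proposition \ref{prop:strictsubadd} is $\sup_{r\in\mathrm{supp}\,\psi}\|T_r u_n\|_\infty$, not $\|u_n\|_\infty$: the nonlinearity is evaluated at $|T_r u_n|$, and a bound on $\|u_n\|_\infty$ gives no control on $\|T_r u_n\|_\infty$ since the free Schr\"odinger flow does not preserve $L^\infty$. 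Second, neither $\|\cdot\|_\infty$ nor $\sup_r\|T_r\cdot\|_\infty$ is a continuous (or even everywhere finite) functional on $L^2$, so a penalized functional is not amenable to Ekeland's principle in the form used here; and to show that the penalty does not raise the infimum you would need to know in advance that $E_\lambda^0=E_\lambda^0(C)$ for some $C$, which is essentially the statement you are trying to prove --- the argument is circular as sketched.

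The paper's mechanism is different and is the actual new idea: apply Ekeland to the \emph{unmodified} $H$ on the sphere to get an almost-critical sequence $(w_n)$, observe that the Lagrange-multiplier component $\sigma_n=-\langle\nabla H(w_n),w_n/\|w_n\|_2\rangle$ stays bounded away from $0$ (this uses \ref{ass:saturation} and $E_\lambda^0<0$), and then replace $w_n$ by the normalized gradient $v_n=-\sqrt{\lambda}\,\nabla H(w_n)/\|\nabla H(w_n)\|_2$, which is $L^2$-close to $w_n$ and hence still minimizing. The uniform bound $\sup_s\|T_s v_n\|_\infty\leq C_\lambda$ then comes for free from the Kunze-type dispersive estimate (Lemma \ref{lem:kunzebound}): $\|T_{s-r}g\|_\infty\lesssim|s-r|^{-1/2}\|g\|_1$ combined with Strichartz admissibility and the $L^{\frac{4}{5-\gamma_2}+\delta}$ integrability of $\psi$ shows that $T_s\nabla H(f)$ is bounded in $L^\infty$ \emph{uniformly in $s$} for any $f\in L^2$. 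This is exactly where the hypotheses $3\leq\gamma_1\leq\gamma_2<5$ and the specific $L^p$ condition on $\psi$ enter, and your proposal does not use them anywhere in the construction of the bounded sequence. Without this (or an equivalent) smoothing mechanism, the truncation/penalization route does not close.
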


\begin{theorem}[Existence of DM solitons for positive average dispersion]\label{thm:positive}
Let $d_{\mathrm{av}}>0$.

Assume $V$ satisfies the conditions \ref{ass:growth}--\ref{ass:strictpos}, with $2\leq \gamma_1\leq \gamma_2 <10$. Assume further that $\psi\geq 0$ is compactly supported, with $\psi\in L^{a_{\delta}}$ for some $\delta>0$, where $a_{\delta} \coloneq \max\left\{1, \frac{4}{10-\gamma_2}+\delta\right\}< \infty$.

Then there exists a threshold $\lambda_{\mathrm{cr}}^{d_{\mathrm{av}}} \geq 0$ such that
\begin{enumerate}
	\item if $0<\lambda<\lambda_{\mathrm{cr}}^{d_{\mathrm{av}}}$, then $E_{\lambda}^{d_{\mathrm{av}}}=0$ and there exists no minimizer for the variational problem \eqref{eq:varprob},
	\item if $\lambda>\lambda_{\mathrm{cr}}^{d_{\mathrm{av}}}$, then $-\infty<E_{\lambda}^{d_{\mathrm{av}}}<0$ and there exists a minimizer $u\in \mathcal{S}_{\lambda}^{d_{\mathrm{av}}}$ of the variational problem \eqref{eq:varprob}. This minimizer is a weak solution of the dispersion management equation
	\begin{equation}\label{eq:DMpos}
 		\omega f = -d_{\mathrm{av}} f'' - \int_{\R} T_r^{-1} \left[ V'(|T_r f|) \tfrac{T_r f}{|T_r f|} \right] \, \psi\,\mathrm{d}r.
	\end{equation}
	for some Lagrange multiplier $\omega < \frac{2 E_{\lambda}^{d_{\mathrm{av}}}}{\lambda} < 0$.
\end{enumerate}

If, in addition, assumption \ref{ass:zerothreshold} holds, then $\lambda_{\mathrm{cr}}^{d_{\mathrm{av}}} = 0$.
\end{theorem}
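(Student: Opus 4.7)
The plan is to adapt the concentration-compactness framework of \cite{CHL15} to the saturated nonlinearity, using a suitable version of Ekeland's variational principle to bypass the failure of uniform strict sub-additivity. I would organise the argument in four stages. First, that $H$ is bounded below on $\mathcal{S}_\lambda^{d_{\mathrm{av}}}$ and that $\lambda \mapsto E_\lambda^{d_{\mathrm{av}}}$ is non-increasing, continuous, and non-positive: coercivity follows from \ref{ass:growth} combined with a dispersive Gagliardo-Nirenberg-type estimate for $N(f)$, exploiting the smoothing of $T_r$, the integrability $\psi\in L^{a_\delta}$, and the restriction $\gamma_2<10$, which together make the nonlinear term subcritical with respect to $\tfrac{d_{\mathrm{av}}}{2}\|f'\|_2^2$. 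Setting $\lambda_{\mathrm{cr}}^{d_{\mathrm{av}}}\coloneq\inf\{\lambda>0:E_\lambda^{d_{\mathrm{av}}}<0\}\in[0,\infty]$, non-existence of minimizers below threshold follows because if some $u\in\mathcal{S}_\lambda^{d_{\mathrm{av}}}$ achieved $H(u)=0$, then \ref{ass:saturation} would make $c\mapsto c^{-2}N(cu)$ strictly monotone in a neighbourhood of $c=1$, so $H(cu)<0$ for $c$ slightly greater than $1$, contradicting $E_{c^2\lambda}^{d_{\mathrm{av}}}=0$. Under \ref{ass:zerothreshold} the bound $V(a)\gtrsim a^{\gamma_0}$ with $\gamma_0<6$ (subcritical in the local pure-power sense) shows via a rescaled bump test function that $E_\lambda^{d_{\mathrm{av}}}<0$ for every $\lambda>0$, hence $\lambda_{\mathrm{cr}}^{d_{\mathrm{av}}}=0$.

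For $\lambda>\lambda_{\mathrm{cr}}^{d_{\mathrm{av}}}$ pick a minimizing sequence $(f_n)$, bounded in $H^1$ by coercivity. Lions' concentration-compactness lemma applied to $|f_n|^2$ leaves vanishing, dichotomy, or tightness up to translation. Vanishing is ruled out since $E_\lambda^{d_{\mathrm{av}}}<0$ while vanishing would force $\|T_r f_n\|_q\to 0$ uniformly in $r$ for some $q>2$, hence $N(f_n)\to 0$. The hard case is dichotomy, which would produce the sub-additivity inequality $E_\lambda^{d_{\mathrm{av}}}\geq E_\alpha^{d_{\mathrm{av}}}+E_{\lambda-\alpha}^{d_{\mathrm{av}}}$ for some $\alpha\in(0,\lambda)$, which one would like to contradict via strict sub-additivity. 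The standard route requires an Ambrosetti-Rabinowitz condition $V'(a)a\geq\kappa V(a)$ with a \emph{uniform} $\kappa>2$; under \ref{ass:saturation}, $\kappa(a)\downarrow 2$ as $a\to\infty$, so this approach breaks down as soon as the minimizing sequence probes the saturation regime.

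The main obstacle is therefore to produce a minimizing sequence that stays out of the saturation regime. To this end I would invoke Ekeland's variational principle to replace $(f_n)$ by a modified minimizing sequence $(\tilde f_n)\subset\mathcal{S}_\lambda^{d_{\mathrm{av}}}$ that is simultaneously an almost critical point of $H|_{\mathcal{S}_\lambda^{d_{\mathrm{av}}}}$ and satisfies a uniform bound $\|\tilde f_n\|_\infty\leq M$. Because $H$ is only assumed to admit directional derivatives along suitable directions (cf.\ Remark~\ref{ass:lipschitz}), the Ekeland step must be formulated in terms of directional derivatives only. The $L^\infty$ truncation is the key device: on the compact range $[0,M]$ the continuous function $\kappa$ is bounded below by some $2+\eta>2$, so \ref{ass:saturation} effectively becomes the Ambrosetti-Rabinowitz condition \eqref{eq:AR} on the relevant range, reinstating strict sub-additivity for the truncated problem and ruling out dichotomy of $(\tilde f_n)$. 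This construction is the principal technical novelty.

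With $(\tilde f_n)$ tight after suitable translations, extract a subsequence converging weakly in $H^1$ and strongly in $L^q_{\mathrm{loc}}$ for every $q\in[2,\infty)$ to a limit $u$. The dispersive bounds on $T_r$, the integrability $\psi\in L^{a_\delta}$, and the growth condition \ref{ass:growth} yield $N(\tilde f_n)\to N(u)$; together with weak lower semicontinuity of the kinetic norm this forces $\|u\|_2^2=\lambda$ and $H(u)=E_\lambda^{d_{\mathrm{av}}}$, so $u$ is a minimizer. The Euler-Lagrange equation \eqref{eq:DMpos} follows from vanishing of the directional derivatives of $H$ at $u$ along tangent directions to $\mathcal{S}_\lambda^{d_{\mathrm{av}}}$. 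Testing \eqref{eq:DMpos} against $u$ gives
\[
\omega\lambda=d_{\mathrm{av}}\|u'\|_2^2-\iint_{\R^2} V'(|T_r u|)\,|T_r u|\,\mathrm{d}x\,\psi(r)\,\mathrm{d}r,
\]
and since $u\not\equiv 0$ and $\kappa(a)>2$ pointwise on $(0,\infty)$, \ref{ass:saturation} yields $\iint V'(|T_r u|)|T_r u|\,\mathrm{d}x\,\psi(r)\,\mathrm{d}r>2N(u)=d_{\mathrm{av}}\|u'\|_2^2-2E_\lambda^{d_{\mathrm{av}}}$, whence $\omega\lambda<2E_\lambda^{d_{\mathrm{av}}}<0$, as claimed.
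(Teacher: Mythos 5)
Your overall architecture (coercivity, threshold, ruling out dichotomy via strict sub-additivity restricted to a non-saturated range, compactness, Euler--Lagrange) matches the paper's, and your below-threshold non-existence argument via the strict monotonicity of $c\mapsto c^{-2}N(cu)$ is a legitimate variant of the paper's argument with $R_C(\lambda)$ and Lemma~\ref{lem:scalingR} --- both rest on the same differential inequality coming from \ref{ass:saturation}, and yours is if anything slightly more direct. The Lagrange multiplier estimate at the end is also correct, since $N(u)=\tfrac{d_{\mathrm{av}}}{2}\|u'\|_2^2-E_{\lambda}^{d_{\mathrm{av}}}>0$ and $\kappa^*>2$ on the bounded range of $|T_ru|$.

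The one place where you go astray is the central device you propose for $\lambda>\lambda_{\mathrm{cr}}^{d_{\mathrm{av}}}$: invoking Ekeland's variational principle to manufacture a minimizing sequence with a uniform $L^\infty$ bound. For $d_{\mathrm{av}}>0$ this is unnecessary, and as described it would not actually work. It is unnecessary because you have already observed that coercivity forces any minimizing sequence to be bounded in $H^1$; then $\sup_{r}\|T_rf_n\|_\infty\leq\|T_rf_n\|_{H^1}=\|f_n\|_{H^1}\leq C_\lambda$ by $\|g\|_\infty^2\leq\|g\|_2\|g'\|_2$ and unitarity of $T_r$ on $H^1$, which is exactly the hypothesis $E_\lambda^{d_{\mathrm{av}}}=E_\lambda^{d_{\mathrm{av}}}(C)$ needed for Proposition~\ref{prop:strictsubadd}. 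This is the paper's route, and it is where the simplification of the $d_{\mathrm{av}}>0$ case over the $d_{\mathrm{av}}=0$ case lies. It would moreover be problematic to transplant the Ekeland construction here: in the zero-dispersion proof the $L^\infty$ bound on the modified sequence comes from replacing $w_n$ by a multiple of $\nabla N(w_n)$ and using the dispersive smoothing estimate of Lemma~\ref{lem:kunzebound}, which requires $3\leq\gamma_1\leq\gamma_2<5$; for $d_{\mathrm{av}}>0$ the constrained gradient of $H$ contains the term $-d_{\mathrm{av}}w_n''$, which need not lie in $L^2$ for $w_n\in H^1$, and the admissible range here is $\gamma_2<10$, outside the scope of that lemma. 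So you should simply delete the Ekeland step in this case and quote the Sobolev embedding; everything downstream of it in your outline (strict sub-additivity on the truncated range, exclusion of dichotomy, tightness, weak $H^1$/strong $L^2$ convergence, continuity of $N$, lower semicontinuity of the kinetic term) then goes through as you describe.
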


The main ingredient in the proof of existence of minimizers of \eqref{eq:varprob} is the construction of a minimizing sequence which satisfies an additional uniform $L^{\infty}$ bound. This prevents the minimizing sequence from reaching the asymptotic regime, where strict sub-additivity would fail.

While for positive average dispersion $d_{\mathrm{av}}>0$, the uniform $L^{\infty}$ bound is readily provided by the Sobolev embedding $H^1(\R) \hookrightarrow L^{\infty}(\R)$, some work has to be done in the setting of zero average dispersion. More precisely, we will construct a modified minimizing sequence via Ekeland's variational principle, which provides an approximate solution of the DM equation, combined with dispersive estimates on the gradient of the nonlinearity.

A special case of our main results, obtained by a different method, has been reported in \cite{HLRZ17}.

\section{Preparatory and technical remarks}
In this section we review some important properties of the nonlinearity $N$.
Most of these properties are adapted from \cite{CHL15} which the reader may consult for more complete background.
The basic ingredient in most of these estimates is
\begin{lemma}\label{lem:st-boundedness}
Let $f\in L^2(\R)$, $2\leq q \leq 6$, and $\psi\in L^{\frac{4}{6-q}}$. Then
\begin{align} \label{eq:boundedness}
	\|T_r f\|_{L^{q}(\R^2,\mathrm{d}x\,\psi \mathrm{d}r)}
	\lesssim
		\|\psi\|_{{\frac{4}{6-q}}}\|f\|_2.
\end{align}
\end{lemma}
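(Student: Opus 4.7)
The estimate is a weighted space-time Strichartz-type inequality, and the plan is to interpolate between the two endpoints $q=2$ and $q=6$. The unitarity of $T_r$ on $L^2(\R)$ gives $\|T_r f\|_{L^2_x} = \|f\|_2$ pointwise in $r$, whereas the diagonal one-dimensional Strichartz estimate for the free Schr\"odinger group gives $\|T_r f\|_{L^6(\R^2_{r,x})} \lesssim \|f\|_2$. Both endpoint bounds are classical, and the general $q$ version should follow by interpolating in $x$ for each fixed $r$ and then applying H\"older in $r$ to absorb the weight $\psi$.

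Concretely, for $q \in [2,6]$ I would write $\tfrac{1}{q} = \tfrac{1-\theta}{2} + \tfrac{\theta}{6}$, so $\theta = \tfrac{3(q-2)}{2q} \in [0,1]$. Log-convexity of $L^p$ norms together with the $L^2$-unitarity yields
\[
  \|T_r f\|_{L^q_x} \le \|T_r f\|_{L^2_x}^{1-\theta}\,\|T_r f\|_{L^6_x}^{\theta} = \|f\|_2^{1-\theta}\,\|T_r f\|_{L^6_x}^{\theta}
\]
for each $r \in \R$. Raising to the $q$-th power, integrating against $\psi(r)\,\mathrm{d}r$, and noting that $q\theta = \tfrac{3(q-2)}{2}$, so $q\theta/6 = (q-2)/4$, an application of H\"older in $r$ with conjugate exponents $\bigl(\tfrac{4}{q-2},\tfrac{4}{6-q}\bigr)$ gives
\[
  \int_{\R} \|T_r f\|_{L^q_x}^q\,\psi(r)\,\mathrm{d}r \le \|f\|_2^{q(1-\theta)}\,\Bigl(\iint_{\R^2}|T_r f|^6\,\mathrm{d}x\,\mathrm{d}r\Bigr)^{(q-2)/4}\,\|\psi\|_{4/(6-q)}.
\]

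Inserting the diagonal Strichartz bound $\iint |T_r f|^6\,\mathrm{d}x\,\mathrm{d}r \lesssim \|f\|_2^6$ and collecting exponents (one checks $q(1-\theta) + 6\cdot(q-2)/4 = q$) produces an inequality of the desired form upon taking a $q$-th root. The only nontrivial analytic input is the $L^6_{r,x}$ Strichartz estimate in one dimension; everything else is standard Lebesgue interpolation and H\"older. The main conceptual observation, which effectively dictates the exponent $\tfrac{4}{6-q}$ on the weight, is that this exponent is precisely the H\"older conjugate of the ``Strichartz fraction'' $\tfrac{q-2}{4}$ arising from the interpolation; I do not anticipate any serious obstacle beyond bookkeeping of these exponents and verifying the endpoint cases $q=2$ and $q=6$ by inspection.
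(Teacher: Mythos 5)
Your argument is correct and is exactly the paper's proof: the paper simply states that the bound ``follows from interpolation between the unitary case $q=2$ and the Strichartz inequality in one space dimension for $q=6$'' (deferring details to \cite{CHL15}), and your pointwise-in-$r$ interpolation followed by H\"older in $r$ with exponents $\bigl(\tfrac{4}{q-2},\tfrac{4}{6-q}\bigr)$ is precisely that computation carried out. As a minor aside, your bookkeeping actually yields the sharper homogeneity $\|\psi\|_{4/(6-q)}^{1/q}\|f\|_2$, which of course implies the stated form with an implicit constant depending on $\|\psi\|_{4/(6-q)}$.
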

\begin{proof}
	The inequality follows from interpolation between the unitary case $q=2$ and the Strichartz inequality in one space dimension for $q=6$, that is,
	\begin{align*}
		\iint_{\R^2} |T_r f(x)|^6\,\mathrm{d}x	\,\mathrm{d}r \leq 12^{-\frac{1}{2}} \|f\|_2^6.
	\end{align*}
	For more details, see \cite[Lemma 2.1]{CHL15}.
\end{proof}

\begin{lemma}[Lemma 4.7 in \cite{CHL15}] \label{lem:continuity} $ $\\[-3ex]
\begin{enumerate}[leftmargin=10ex]
	\item[$\mathbf{d_{\mathrm{av}}=0:}$] If $2\leq \gamma_1\leq \gamma_2\leq 6$ and $\psi\in L^{\frac{4}{6-\gamma_2}}$ then the nonlinear nonlocal functional $N: L^2(\R)\to \R$ given by
		\begin{align*}
			L^2(\R)\ni f\mapsto N(f)=
			\iint_{\R^2}V(|T_r f|)\, \mathrm{d}x\, \psi \mathrm{d}r
		\end{align*}
		is locally Lipshitz continuous on $L^2$ in the sense that
		\begin{align*}
			|N(f_1)-N(f_2)| \lesssim \left(1+ \|f_1\|^{\gamma_2-1}_2 + \|f_2\|^{\gamma_2-1}_2 \right) \|f_1-f_2\|_2
		\end{align*}
		where the implicit constant depends only on the $L^{\frac{4}{6-\gamma_2}}$ norm of $\psi$.
	\item[$\mathbf{d_{\mathrm{av}}>0:}$] If $2\leq \gamma_1\leq \gamma_2<\infty$ and $\psi\in L^1$, then the nonlinear nonlocal functional $N: H^1(\R)\to \R$ given by
		\begin{align*}
			H^1(\R)\ni f\mapsto N(f)=
			\iint_{\R^2}V(|T_r f|)\, \mathrm{d}x\, \psi \mathrm{d}r
		\end{align*}
		is locally Lipschitz continuous in the sense that
		\begin{align*}
			|N(f_1)-N(f_2)| \lesssim \left(1+ \|f_1\|_{H^1}^{\gamma_2-2} + \|f_2\|_{H^1}^{\gamma_2-2}\right)\left( \|f_1\|_2+ \|f_2\|_2 \right) \|f_1-f_2\|_2.
		\end{align*}
	\end{enumerate}
\end{lemma}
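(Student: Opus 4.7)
The plan is to reduce the nonlinear estimate to a pointwise Lipschitz bound on the scalar function $V$ and then apply either the Strichartz estimate of Lemma~\ref{lem:st-boundedness} (when $d_{\mathrm{av}}=0$) or the one-dimensional Sobolev embedding $H^1(\R)\hookrightarrow L^\infty(\R)$ (when $d_{\mathrm{av}}>0$). From the fundamental theorem of calculus combined with assumption \ref{ass:growth}, one has for all $a_1,a_2\geq 0$,
\begin{align*}
|V(a_1)-V(a_2)| \leq |a_1-a_2|\sup_{0\leq s\leq 1}|V'(sa_1+(1-s)a_2)| \lesssim |a_1-a_2|\bigl(a_1^{\gamma_1-1}+a_2^{\gamma_1-1}+a_1^{\gamma_2-1}+a_2^{\gamma_2-1}\bigr).
\end{align*}
Setting $a_i=|T_r f_i(x)|$ and using $\bigl||T_r f_1|-|T_r f_2|\bigr|\leq|T_r(f_1-f_2)|$, pointwise integration in $(x,r)$ against $\psi$ reduces $|N(f_1)-N(f_2)|$ to a sum of integrals of the form
\begin{align*}
I_{i,j} \coloneq \iint_{\R^2} |T_r(f_1-f_2)|\,|T_r f_j|^{\gamma_i-1}\,\mathrm{d}x\,\psi(r)\mathrm{d}r,\qquad i,j\in\{1,2\}.
\end{align*}

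In the zero-dispersion case, I would apply Hölder in the measure $\mathrm{d}x\,\psi\mathrm{d}r$ with conjugate exponents $\gamma_i$ and $\gamma_i/(\gamma_i-1)$, so that each factor becomes a space-time $L^{\gamma_i}$ norm of a free Schrödinger evolution. Since $2\leq\gamma_i\leq 6$ and $\psi\in L^{4/(6-\gamma_2)}$ has compact support (so $\psi\in L^{4/(6-\gamma_1)}$ as well), Lemma~\ref{lem:st-boundedness} yields $I_{i,j}\lesssim\|f_1-f_2\|_2\|f_j\|_2^{\gamma_i-1}$, with the implicit constant depending only on $\|\psi\|_{4/(6-\gamma_2)}$. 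The $\gamma_1$-contributions can then be absorbed into the $\gamma_2$-ones via $\|f_j\|_2^{\gamma_1-1}\lesssim 1+\|f_j\|_2^{\gamma_2-1}$, producing the first stated inequality.

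In the positive-dispersion case, I would instead pull out the high power of $|T_r f_j|$ in $L^\infty$. Since $T_r$ commutes with $\partial_x$, it acts isometrically on $H^1(\R)$, and the one-dimensional Sobolev embedding gives $\|T_r g\|_\infty\lesssim\|g\|_{H^1}$. Writing $|T_r f_j|^{\gamma_i-1}\leq\|f_j\|_{H^1}^{\gamma_i-2}|T_r f_j|$, Cauchy--Schwarz in $x$ together with the $L^2$ unitarity of $T_r$ controls the remaining integral by $\|f_1-f_2\|_2\|f_j\|_2$ for each fixed $r$, and integration against $\psi\,\mathrm{d}r$ costs only the factor $\|\psi\|_1<\infty$. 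Collapsing $\gamma_1$-terms into $\gamma_2$-terms as before delivers the second inequality. The argument is essentially routine; the only point worth checking is that the growth bound in \ref{ass:growth} forces $|V'(a)|\to 0$ as $a\to 0^+$ (since $\gamma_1\geq 2$), so that the fundamental theorem of calculus applies on all of $[0,\infty)$ after setting $V'(0):=0$.
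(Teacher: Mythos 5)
Your proof is correct, and it is essentially the argument the paper relies on: the paper itself only cites \cite[Lemma 4.7]{CHL15} for this statement, and your two-step scheme (pointwise Lipschitz bound on $V$ from \ref{ass:growth} via the fundamental theorem of calculus, then H\"older plus Lemma \ref{lem:st-boundedness} for $d_{\mathrm{av}}=0$, respectively $\|T_rg\|_\infty\le\|g\|_{H^1}$ plus Cauchy--Schwarz and unitarity for $d_{\mathrm{av}}>0$) is exactly the estimation pattern the paper uses in its proof of Lemma \ref{lem:differentiability}. The only point needing the standing hypotheses is your passage from $\psi\in L^{4/(6-\gamma_2)}$ to $\psi\in L^{4/(6-\gamma_1)}$, which you correctly justify by the compact support (equivalently, by interpolation with the assumed integrability of $\psi$), and you rightly flag the extension $V'(0):=0$ needed since $V$ is only assumed differentiable on $(0,\infty)$.
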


The directional derivatives of the nonlinearity are given by

\begin{lemma} \label{lem:differentiability}
If $2\leq \gamma_1\leq\gamma_2\leq 6$ and $\psi\in L^1\cap L^{\frac{4}{6-\gamma_2}}$, respectively if $2\leq\gamma_1\leq\gamma_2<\infty$ and $\psi\in L^1$, then for any $f,h \in L^2(\R)$, respectively $f,h\in H^1(\R)$, the functional $N$ as above has directional derivative
given by
 \begin{align}\label{eq:Nderivative}
  D_h N(f)=
 	   \int_{\R} \Re \left\langle V'(|T_r f|)\tfrac{T_r f}{|T_r f|}, T_r h \right\rangle \,\psi \mathrm{d}r.
 \end{align}
 In particular, $h \mapsto D_h N(f)$ is real linear and continuous.
\end{lemma}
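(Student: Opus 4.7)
The plan is to compute the directional derivative by differentiating under the double integral. Fix $f,h$ in the appropriate space and, for each $(r,x)$, set $g_{r,x}(t) \coloneq V(|T_r f(x) + t\, T_r h(x)|)$. Linearity of $T_r$ gives $T_r(f+th) = T_r f + t\,T_r h$, hence
\[
t^{-1}\bigl(N(f+th) - N(f)\bigr) = \iint_{\R^2} t^{-1}\bigl(g_{r,x}(t) - g_{r,x}(0)\bigr)\, \mathrm{d}x\, \psi\,\mathrm{d}r,
\]
and the goal is to pass the $t \to 0$ limit inside the double integral by dominated convergence.

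For the pointwise derivative, on the set where $T_r f(x) \neq 0$ the map $t \mapsto |T_r f(x) + t\,T_r h(x)|$ is smooth near $t=0$, so the chain rule and $\partial_t|w+tz|\big|_{t=0}=\Re(\bar w z)/|w|$ yield
\[
g_{r,x}'(0) = V'(|T_r f(x)|)\,\frac{\Re\bigl(\overline{T_r f(x)}\,T_r h(x)\bigr)}{|T_r f(x)|},
\]
which is exactly the integrand on the right-hand side of \eqref{eq:Nderivative}. On the set $\{T_r f = 0\}$, integrating \ref{ass:growth} gives $|V(a)| \lesssim a^{\gamma_1} + a^{\gamma_2}$, so $|t|^{-1}|g_{r,x}(t)-g_{r,x}(0)| \lesssim |t|^{\gamma_1-1}|T_r h|^{\gamma_1} + |t|^{\gamma_2-1}|T_r h|^{\gamma_2} \to 0$ as $t\to 0$ since $\gamma_1,\gamma_2 \geq 2$. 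This is consistent with the convention that $V'(|T_r f|)\,T_r f/|T_r f|$ equals $0$ on $\{T_r f = 0\}$, which in turn follows from $V'(a) = O(a^{\gamma_1-1}) = O(a)$ as $a\to 0^+$.

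For the dominating function, a mean value argument applied to $s \mapsto V(|T_r f + s\,T_r h|)$ combined with \ref{ass:growth} yields, for $|t|\leq 1$,
\[
\bigl|t^{-1}(g_{r,x}(t)-g_{r,x}(0))\bigr| \lesssim \bigl(|T_r f|^{\gamma_1-1} + |T_r h|^{\gamma_1-1} + |T_r f|^{\gamma_2-1} + |T_r h|^{\gamma_2-1}\bigr)\,|T_r h|.
\]
In the case $d_{\mathrm{av}}=0$, H\"older's inequality in $L^{\gamma_j}(\R^2,\mathrm{d}x\,\psi\mathrm{d}r)$ combined with Lemma \ref{lem:st-boundedness} shows that each summand is integrable, using $\psi \in L^{4/(6-\gamma_2)}$ for the $\gamma_2$ terms and interpolating with $\psi \in L^1$ (the assumption $\psi\in L^1\cap L^{4/(6-\gamma_2)}$ is exactly what the statement demands) to cover the $\gamma_1$ terms. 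In the case $d_{\mathrm{av}}>0$, the Sobolev embedding $H^1(\R) \hookrightarrow L^\infty(\R)$ together with the $L^2$-unitarity of $T_r$ and $\psi \in L^1$ controls the dominator for any $\gamma_2<\infty$. Dominated convergence then produces \eqref{eq:Nderivative}.

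Real linearity of $h \mapsto D_h N(f)$ is then immediate from the formula, from the linearity of $T_r$, and from the real linearity of the second slot of $\Re\langle \cdot,\cdot\rangle$. For continuity, Cauchy--Schwarz followed by \ref{ass:growth} and Lemma \ref{lem:st-boundedness} yields
\[
|D_h N(f)| \lesssim \bigl(\|f\|_2^{\gamma_1-1} + \|f\|_2^{\gamma_2-1}\bigr)\|h\|_2
\]
in the $d_{\mathrm{av}}=0$ case (with analogous $H^1$-bounds in the $d_{\mathrm{av}}>0$ case), establishing continuity of the linear functional. The only real obstacle in the whole argument is the book-keeping required to place every term of the dominating function in $L^1(\R^2,\mathrm{d}x\,\psi\mathrm{d}r)$, which is precisely the purpose of the spacetime bound of Lemma \ref{lem:st-boundedness}; the rest is routine.
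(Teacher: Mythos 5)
Your proof is correct and follows essentially the same route as the paper's: differentiation under the integral sign justified by dominated convergence, followed by the identical H\"older/Strichartz (Lemma \ref{lem:st-boundedness}) and Sobolev-embedding arguments for linearity and continuity of $h\mapsto D_hN(f)$. The only differences are cosmetic --- you build the dominating function explicitly via a mean value argument and treat the set $\{T_r f=0\}$ separately, whereas the paper writes the difference quotient with the integral remainder $\int_0^1 \tfrac{\mathrm{d}}{\mathrm{d}s}V(|T_r(f+sth)|)\,\mathrm{d}s$ and is terser about the domination; your extra care on the zero set of $T_r f$ is, if anything, a small improvement in rigor.
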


\begin{proof}
Let $f \in L^2(\R)$ and $t \neq 0$. For any $h\in L^2(\R)$ the difference quotient of $N$ is
\begin{align}\label{eq:derivative}
\frac{N(f+t h)-N(f)}{t}&=\frac{1}{t}\left[\iint_{\R^2}V(|T_r(f+t h)|)-V(|T_r f|)\,\mathrm{d}x \,\psi \mathrm{d}r\right]\notag\\
& =\frac{1}{t} \iint_{\R^2}\int _0^1 \frac{\mathrm{d}}{\mathrm{d}s} V(|T_r(f+s t h)|) \,\mathrm{d}s \,\mathrm{d}x \,\psi \mathrm{d}r.
\end{align}
Since $V$ is differentiable, we obtain
\begin{align*}
\frac{\mathrm{d}}{\mathrm{d}s} V(|T_r(f+s t h)|)=V ' (|T_r (f+s t h )|) \frac{t (T_rf \overline{T_rh}+T_rh \overline{T_rf}+2s t |T_r h|^2)}{2|T_r (f+s t h)|}
\end{align*}
and thus
\begin{align*}
\eqref{eq:derivative}=\iint_{\R^2} \int _0^1 V ' (|T_r (f+s t h )|) \frac{T_rf \overline{T_rh}+T_rh \overline{T_rf}+2s t |T_r h|^2}{2|T_r (f+s t h)|} \,\mathrm{d}s \,\mathrm{d}x \,\psi \mathrm{d}r.
\end{align*}
Under the assumptions $2\le \gamma_1\leq \gamma_2\leq 6$, respectively $2\leq \gamma_1\leq \gamma_2<\infty$, on the nonlinearity, Lebesgue's dominated convergence theorem, together with the continuity of $V'$, implies that for $t\to 0$,
\begin{align*}
D_hN(f)&=\iint_{\R^2} \int _0^1 V ' (|T_r f|) \frac{\Re (T_rf \overline{T_rh})}{|T_r f|} \,\mathrm{d}s \,\mathrm{d}x \,\psi \mathrm{d}r \\
&=\iint_{\R^2} V ' (|T_r f|) \frac{\Re (T_rf \overline{T_rh})}{|T_r f|}  \,\mathrm{d}x \,\psi \mathrm{d}r,
\end{align*}
which completes the proof of \eqref{eq:Nderivative}.
Linearity of the map $h\mapsto D_h N(f)$ is immediate from \eqref{eq:Nderivative}, to see the continuity observe that by assumption \ref{ass:growth},
\begin{align*}
	|D_h N(f)| &\leq \iint_{\R^2} | V'(|T_rf(x)|) | \, |T_r h(x)| \,\mathrm{d}x\,\psi\,\mathrm{d}r \\
	&\leq \iint_{\R^2} \left[ |T_r f(x)|^{\gamma_1-1} + |T_r f(x)|^{\gamma_2-1} \right] \, |T_r h(x)| \,\mathrm{d}x\,\psi\,\mathrm{d}r
\end{align*}
For $2 \leq \gamma \leq 6$, H\"older's inequality (with exponents $\frac{\gamma}{\gamma-1}$ and $\gamma$), implies the bound
\begin{align*}
	\iint_{\R^2} | T_r f(x) |^{\gamma-1} \, |T_r h(x)| \,\mathrm{d}x\,\psi\,\mathrm{d}r
	&\leq \|T_r f\|_{L^{\gamma}(\mathrm{d}x\,\psi\mathrm{d}r)}^{\gamma-1} \, \|T_r h\|_{L^{\gamma}(\mathrm{d}x\,\psi\mathrm{d}r)} \\
	&\lesssim \|f\|_2^{\gamma-1} \, \|h\|_2
\end{align*}
by Lemma \ref{lem:st-boundedness}. By linearity, this already shows continuity of $h\mapsto D_h N(f)$ in the case $d_{\mathrm{av}}=0$.

In the case of positive average dispersion, $d_{\mathrm{av}}>0$, we can use $f\in H^1$ and Cauchy-Schwarz to bound
\begin{align*}
	\iint_{\R^2} | T_r f(x) |^{\gamma-1} \, |T_r h(x)| \,\mathrm{d}x\,\psi\,\mathrm{d}r
	&\leq \sup_{r} \|T_r f\|_{\infty}^{\gamma-2} \, \|T_r f\|_{L^2(\mathrm{d}x\,\psi\mathrm{d}r)} \, \|T_r h\|_{L^{2}(\mathrm{d}x\,\psi\mathrm{d}r)} \\
	&\lesssim \|f\|_{H^1}^{\gamma-2} \, \|f\|_2 \, \|h\|_2,
\end{align*}
for $2\leq \gamma<\infty$, since
\begin{align*}
	\sup_{r\in\R} \|T_r f\|_{\infty} \leq \sup_{r\in\R} \|T_r f\|_{H^1} = \|f\|_{H^1}
\end{align*}
by the simple estimate $\|g\|_{\infty} \leq \left(\|g\|_2 \|g'\|_2\right)^{1/2} \leq \|g\|_{H^1}$ and unitarity of the free Schr\"odinger evolution $T_r$ on $H^1$.
\end{proof}

\begin{remark}\label{rem:duality}
	In the setting of $d_{\mathrm{av}} = 0$, that is, when working in $L^2(\R)$, Lemma \ref{lem:differentiability} also identifies the unique Riesz representative $\nabla N(f)$ (with respect to the {real} inner product $\mathrm{Re}\langle\cdot, \cdot\rangle$) of the continuous linear functional $h\mapsto D_h N(f)$ for fixed $f\in L^2(\R)$,
	\begin{align*}
	\mathrm{Re} \langle \nabla N(f), h\rangle = D_h N(f) = \Re \left\langle \int_{\R} T_r^{-1} \left[ V'(|T_r f|) \tfrac{T_r f}{|T_r f|} \right] \psi\,\mathrm{d}r, h \right\rangle,
 	\end{align*}
 	so
 	\begin{align*}
 		\nabla N(f) = \int_{\R} T_r^{-1} \left[ V'(|T_r f|) \tfrac{T_r f}{|T_r f|} \right] \psi\,\mathrm{d}r.
 	\end{align*}
\end{remark}

Even though we do not need the following for our main results, we state and prove
\begin{proposition}\label{prop:diff-nonlin}
	Assume that \eqref{eq:lipschitz} holds in addition to the assumptions of Lemma \ref{lem:differentiability}, with $2\leq \gamma \leq 6$, respectively, $2\leq\gamma<\infty$. Then the functional $H: \mathcal{H}^{d_\mathrm{av}} \to \R$,
	is of class $\mathcal{C}^1(\mathcal{H}^{d_\mathrm{av}}, \R)$.
\end{proposition}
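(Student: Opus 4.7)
The plan is to reduce to showing that $N$ is of class $\mathcal{C}^1$, since the kinetic term $\tfrac{d_{\mathrm{av}}}{2}\|f'\|_2^2$ is a bounded quadratic form on $H^1(\R;\C)$ and hence smooth (its Fréchet derivative at $f$ is $h \mapsto d_{\mathrm{av}}\,\Re\langle f', h'\rangle$, which depends linearly and continuously on $f$). By the standard criterion for $\mathcal{C}^1$ regularity, it then suffices to prove that the Gâteaux derivative $f \mapsto DN(f) \in (\mathcal{H}^{d_{\mathrm{av}}})^*$, whose existence and explicit form are provided by Lemma \ref{lem:differentiability}, depends continuously on $f$ in the operator norm; Fréchet differentiability follows automatically.

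The analytical core is a pointwise Lipschitz estimate for the map $g: \C \to \C$ defined by $g(z) = V'(|z|)\tfrac{z}{|z|}$, $g(0) = 0$. I would split
\[
g(z_1) - g(z_2) = V'(|z_1|)\Bigl(\tfrac{z_1}{|z_1|} - \tfrac{z_2}{|z_2|}\Bigr) + \bigl(V'(|z_1|) - V'(|z_2|)\bigr)\tfrac{z_2}{|z_2|}.
\]
The elementary bound $\bigl|\tfrac{z_1}{|z_1|} - \tfrac{z_2}{|z_2|}\bigr| \leq \tfrac{2|z_1 - z_2|}{\max(|z_1|, |z_2|)}$, together with $|V'(a)| \lesssim a^{\gamma-1}$ (which follows from \eqref{eq:lipschitz} with $z=0$, noting that $V'(0) = 0$ by \ref{ass:growth} since $\gamma_1 \geq 2$), controls the first term by $(|z_1| + |z_2|)^{\gamma-2}\,|z_1 - z_2|$, and \eqref{eq:lipschitz} directly controls the second term by the same quantity. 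Together this produces the desired
\[
|g(z_1) - g(z_2)| \lesssim (|z_1| + |z_2|)^{\gamma-2}\,|z_1 - z_2|.
\]

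With this pointwise estimate in hand, for $f_1, f_2, h \in \mathcal{H}^{d_{\mathrm{av}}}$ the formula from Lemma \ref{lem:differentiability} yields
\[
|D_h N(f_1) - D_h N(f_2)| \lesssim \iint_{\R^2} \bigl(|T_r f_1| + |T_r f_2|\bigr)^{\gamma-2}\,|T_r(f_1 - f_2)|\,|T_r h|\,\mathrm{d}x\,\psi\,\mathrm{d}r.
\]
For $d_{\mathrm{av}} = 0$, Hölder's inequality with exponents $\tfrac{\gamma}{\gamma-2}, \gamma, \gamma$ followed by Lemma \ref{lem:st-boundedness} at $q = \gamma \leq 6$ gives
\[
\|DN(f_1) - DN(f_2)\|_{(L^2)^*} \lesssim \bigl(\|f_1\|_2 + \|f_2\|_2\bigr)^{\gamma-2}\|f_1 - f_2\|_2.
\]
For $d_{\mathrm{av}} > 0$ and general $\gamma < \infty$, I would instead pull $(|T_r f_1| + |T_r f_2|)^{\gamma-2}$ out in $L^\infty$, using $\sup_r \|T_r f\|_\infty \leq \|f\|_{H^1}$, and estimate the remaining integral by Cauchy-Schwarz together with Lemma \ref{lem:st-boundedness} at $q = 2$, obtaining
\[
\|DN(f_1) - DN(f_2)\|_{(H^1)^*} \lesssim \bigl(\|f_1\|_{H^1} + \|f_2\|_{H^1}\bigr)^{\gamma-2}\|f_1 - f_2\|_2.
\]
In either case $DN$ is locally Lipschitz, hence continuous.

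The main obstacle is the pointwise Lipschitz estimate for $g$: the unit-vector factor $z/|z|$ is not differentiable at the origin, and a naive estimate produces singular $1/|z|$ contributions. The resolution is that the vanishing of $V'$ at $0$ (a consequence of $\gamma \geq 2$) absorbs the singularity, which is precisely the analytical content packaged into the hypothesis \eqref{eq:lipschitz}.
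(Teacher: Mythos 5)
Your proposal is correct and follows essentially the same route as the paper: the same splitting of the pointwise difference $V'(|z_1|)\tfrac{z_1}{|z_1|}-V'(|z_2|)\tfrac{z_2}{|z_2|}$ into a $V'$-difference term controlled by \eqref{eq:lipschitz} and a unit-vector term whose $1/\max(|z_1|,|z_2|)$ singularity is absorbed by the growth of $V'$, followed by the same H\"older-plus-Lemma \ref{lem:st-boundedness} argument for $d_{\mathrm{av}}=0$ and the $L^\infty$/Cauchy--Schwarz argument for $d_{\mathrm{av}}>0$. The only cosmetic differences are that you deduce $|V'(a)|\lesssim a^{\gamma-1}$ from \eqref{eq:lipschitz} at $z=0$ rather than invoking \ref{ass:growth} (so your final bound has one power instead of three), and that you make explicit the standard ``continuous G\^ateaux derivative implies $\mathcal{C}^1$'' step that the paper leaves implicit.
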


\begin{proof}
Since $f\mapsto \|f'\|_2^2$ is a $\mathcal{C}^1$ functional on $H^1$, and the directional derivatives of $N$ are real linear, see Lemma \ref{lem:differentiability}, it suffices to show that $f \mapsto D_hN(f)$ is continuous for each $h\in \mathcal{H}^{d_\mathrm{av}}$. We start by estimating
\begin{align*}
	&\left|D_hN(f + g) - D_hN(f)\right| \\
	&\leq \iint_{\R^2} |T_r h| \left| V'(|T_r f + T_r g|) \frac{T_r f + T_r g}{|T_r f + T_r g|} - V'(|T_r f|) \frac{T_r f}{|T_r f|} \right| \,\mathrm{d}x \, \psi \mathrm{d}r.
\end{align*}

Observe that by assumption \ref{ass:growth} and inequality \eqref{eq:lipschitz}, for any $z,w \in \C$,
\begin{align*}
	&\left| V'(|z+w|) \frac{z+w}{|z+w|} - V'(|z|) \frac{z}{|z|} \right| \\
	&\leq \left| V'(|z+w|) - V'(|z|) \right| + \frac{|V'(|z+w|)|}{|z+w|} |w| + \left| V'(|z+w|) \right| \left| \frac{z}{|z+w|} - \frac{z}{|z|} \right| \\
	&= \left| V'(|z+w|) - V'(|z|) \right| +  \frac{|V'(|z+w|)|}{|z+w|} |w| +  \frac{|V'(|z+w|)|}{|z+w|} \left| |z| - |z+w| \right| \\
	&\lesssim |w| \left[ (|z| + |w| )^{\gamma-2} + (|z| + |w| )^{\gamma_1-2} + (|z| + |w| )^{\gamma_2-2} \right].
\end{align*}	
It follows that $\left|D_hN(f + g) - D_hN(f)\right|$ can be bounded by a sum of terms of the form
\begin{align*}
	\iint_{\R^2} |T_r h| |T_r g| \left( |T_r f| + |T_r g|\right)^{\gamma-2}\,\mathrm{d}x\,\psi\,\mathrm{d}r.
\end{align*}
Using H\"{o}lder's inequality, with exponents $\gamma$, $\gamma$, $\frac{\gamma}{\gamma-2}$, and Lemma \ref{lem:st-boundedness}, we obtain the bound
\begin{align*}
\begin{split}
	&\left|D_hN(f + g) - D_hN(f)\right|  \\
	&\lesssim \|h\|_2 \|g\|_2 \left[ \left( \|f\|_2 + \|g\|_2 \right)^{\gamma-2} + \left( \|f\|_2 + \|g\|_2 \right)^{\gamma_1-2} + \left( \|f\|_2 + \|g\|_2 \right)^{\gamma_2-2} \right],
\end{split}
\end{align*}
as in the proof of Lemma \ref{lem:differentiability},
which shows that all directional derivatives $D_hN$ are locally Lipshitz for each fixed $h\in \mathcal{H}^{d_\mathrm{av}}$.
Therefore, $H \in \mathcal{C}^1(\mathcal{H}^{d_\mathrm{av}};\R)$ if $\gamma \in [2,6]$ for $d_{\mathrm{av}}=0$. Similarly, one proves the $d_{\mathrm{av}}>0$ case with
$\gamma\geq 2$.
\end{proof}

\section{Existence of minimizers}

\subsection{Strict sub-additivity of the energy}

The crucial ingredient in establishing existence of minimizers is restoring (pre-)compactness of minimizing sequences modulo the natural symmetries of the problem. In this section we prove sub-additivity of the ground state energy with respect to $\lambda>0$.

While strict sub-additivity was established under the Ambrosetti-Rabinowitz condition \eqref{eq:AR} in \cite{CHL15}, in general, it  fails in the saturation regime, where $\frac{V'(a) a}{V(a)} \to 2$.
For any $C>0$, we define the quantity
\begin{align}\label{eq:Min-modified}
	E_{\lambda}^{d_{\mathrm{av}}}(C) \coloneq \inf\big\{H(f): f\in \mathcal{S}_{\lambda}^{d_{\mathrm{av}}}, \sup_{r\in\mathrm{supp}\,\psi}\|T_r f\|_{\infty} \leq C \big\}.
\end{align}

The following proposition says that strict sub-additivity still holds in the case of saturated nonlinearities, at least if minimizing sequences do not reach the saturation regime:

\begin{proposition}[Strict sub-additivity] \label{prop:strictsubadd}
Assume that \emph{\ref{ass:growth}} and \emph{\ref{ass:saturation}} hold, and that for any $\lambda> 0$ there exists a $C>0$ such that
\begin{align}
	E_{\lambda}^{d_{\mathrm{av}}} = E_{\lambda}^{d_{\mathrm{av}}}(C).
\end{align}
Then for any  $0<\delta<\frac{\lambda}{2}$, and $\lambda_1,\lambda_2 \geq \delta$ with $\lambda_1 + \lambda_2 \leq \lambda$, one has
\begin{align*}
	E_{\lambda_1}^{d_{\mathrm{av}}} + E_{\lambda_2}^{d_{\mathrm{av}}} \geq \left[1 - \left(2^{\frac{\kappa^*(C)}{2}} - 2\right) \left(\frac{\delta}{\lambda}\right)^{\frac{\kappa^*(C)}{2}} \right] E_{\lambda}^{d_{\mathrm{av}}},
\end{align*}
whenever $E_{\lambda}^{d_{\mathrm{av}}} \leq 0$, where $\kappa^*(C)\coloneq \inf_{0<a\leq C} \kappa(a) >2$.
\end{proposition}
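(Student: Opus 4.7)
The plan is to proceed in three stages. First, I would extract a pointwise scaling inequality for $V$ from \ref{ass:saturation}. On $a \in (0, C]$ the condition $V'(a)\, a \geq \kappa(a) V(a) \geq \kappa^*(C) V(a)$ shows that $\theta \mapsto \theta^{-\kappa^*(C)} V(\theta a)$ has non-negative derivative on $(0, 1]$ and is therefore non-decreasing; this yields
\begin{align*}
V(\theta a) \leq \theta^{\kappa^*(C)} V(a) \qquad \text{for } 0 < \theta \leq 1,\ 0 < a \leq C,
\end{align*}
equivalently $V(sb) \geq s^{\kappa^*(C)} V(b)$ for $s \geq 1$ and $0 < b \leq C/s$. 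Integration of \ref{ass:saturation} with the universal bound $\kappa \geq 2$ also gives the unconstrained weak version $V(sa) \geq s^2 V(a)$ for $s \geq 1$.

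The heart of the proof is to establish the monotonicity $E_\mu^{d_{\mathrm{av}}} \geq (\mu/\lambda)^{\kappa^*(C)/2} E_\lambda^{d_{\mathrm{av}}}$ for all $\mu \in (0, \lambda]$ whenever $E_\lambda^{d_{\mathrm{av}}} \leq 0$. I would pick a minimizing sequence $g_n \in \mathcal{S}_\mu^{d_{\mathrm{av}}}$ respecting the tight constraint $\sup_r \|T_r g_n\|_\infty \leq C\sqrt{\mu/\lambda}$ (whose existence follows from the hypothesis $E_\mu = E_\mu(\cdot)$ and the monotonicity of $C' \mapsto E_\mu(C')$, possibly after a tightening argument) and scale up to $f_n = s g_n \in \mathcal{S}_\lambda^{d_{\mathrm{av}}}$ with $s = \sqrt{\lambda/\mu} \geq 1$, so that $\sup_r \|T_r f_n\|_\infty \leq C$. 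The pointwise reverse saturation inequality integrates to $N(f_n) \geq s^{\kappa^*(C)} N(g_n)$, and since $T(f_n) = s^2 T(g_n)$ and $s^{\kappa^*(C) - 2} \geq 1$, one computes
\begin{align*}
H(f_n) = s^2 T(g_n) - N(s g_n) \leq s^2 T(g_n) - s^{\kappa^*(C)} N(g_n) \leq s^{\kappa^*(C)} H(g_n).
\end{align*}
Combining with $H(f_n) \geq E_\lambda^{d_{\mathrm{av}}}$ (as $f_n \in \mathcal{S}_\lambda^{d_{\mathrm{av}}}$) and dividing by $s^{\kappa^*(C)} > 0$ (both sides being non-positive) gives $H(g_n) \geq (\mu/\lambda)^{\kappa^*(C)/2} E_\lambda^{d_{\mathrm{av}}}$; passing to the limit yields the monotonicity.

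Applying this with $\mu = \lambda_1$ and $\mu = \lambda_2$ and summing,
\begin{align*}
E_{\lambda_1}^{d_{\mathrm{av}}} + E_{\lambda_2}^{d_{\mathrm{av}}} \geq \bigl[(\lambda_1/\lambda)^{\kappa^*/2} + (\lambda_2/\lambda)^{\kappa^*/2}\bigr] E_\lambda^{d_{\mathrm{av}}},
\end{align*}
and the claim reduces to the elementary inequality
\begin{align*}
s_1^{\kappa^*/2} + s_2^{\kappa^*/2} \leq 1 - (2^{\kappa^*/2} - 2)(\delta/\lambda)^{\kappa^*/2}
\end{align*}
valid for $s_1, s_2 \in [\delta/\lambda,\, 1-\delta/\lambda]$ with $s_1 + s_2 \leq 1$ and $\kappa^*/2 > 1$. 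The left-hand side is convex in $(s_1, s_2)$, hence maximized at the vertices $(\delta/\lambda, \delta/\lambda)$ and $(\delta/\lambda, 1-\delta/\lambda)$, where the inequality reduces respectively to $(2\delta/\lambda)^{\kappa^*/2} \leq 1$ (true since $\delta \leq \lambda/2$) and $(1-\delta/\lambda)^{\kappa^*/2} + (2^{\kappa^*/2} - 1)(\delta/\lambda)^{\kappa^*/2} \leq 1$ (a one-variable calculus estimate, tight at $\delta = \lambda/2$); multiplying by $E_\lambda^{d_{\mathrm{av}}} \leq 0$ reverses the direction and delivers the bound. The main technical obstacle is justifying the tight $L^\infty$ constraint at scale $C\sqrt{\mu/\lambda}$ on the minimizing sequence for $E_\mu$: using only the universally-valid weaker bound $V(sa) \geq s^2 V(a)$ (from monotonicity of $V(a)/a^2$) gives only the linear monotonicity $E_\mu \geq (\mu/\lambda) E_\lambda$, which after summation yields only the weak subadditivity $E_{\lambda_1} + E_{\lambda_2} \geq E_\lambda$ and is insufficient for the strict improvement in the claim.
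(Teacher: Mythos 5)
Your argument is essentially the paper's: the same integration of \ref{ass:saturation} yields $V(\theta a)\le \theta^{\kappa^*(C)}V(a)$ for $\theta\le 1$ and $a\le C$, the same scaling relation $E_{\mu\lambda}^{d_{\mathrm{av}}}\ge \mu^{\kappa^*(C)/2}E_{\lambda}^{d_{\mathrm{av}}}$ is the key lemma (the paper scales a level-$\lambda$ competitor down by $\mu^{1/2}$ rather than a level-$\mu\lambda$ one up by $\mu^{-1/2}$, which is the identical computation since the kinetic term scales like $s^2\le s^{\kappa^*}$), and the conclusion rests on the same elementary bound $\mu_1^{\kappa^*/2}+\mu_2^{\kappa^*/2}\le 1-(2^{\kappa^*/2}-2)(\delta/\lambda)^{\kappa^*/2}$, which you verify by convexity where the paper uses monotonicity of $t\mapsto(1+t)^{\kappa^*/2}-1-t^{\kappa^*/2}$. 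The obstacle you flag at the end --- that the hypothesis supplies only \emph{some} constant at each mass level rather than the rescaled constant $C\sqrt{\mu/\lambda}$ needed to keep the $L^\infty$ bound at level $C$ after scaling --- is a genuine subtlety, but it is equally present (and silently absorbed into the single letter $C$) in the paper's own chain of inequalities, so it does not distinguish your proof from theirs; in the application to the main theorems the constant is in any case fixed by the single modified minimizing sequence at level $\lambda$.
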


\begin{remark}\label{rem:negativity}
	We will show in Propositions \ref{prop:wellposedness-zero} and \ref{prop:wellposedness-pos} that in fact for any $\lambda>0$, the ground state energy $E_{\lambda} \leq 0$.
	Proposition \ref{prop:strictsubadd} implies that
	\begin{align*}
		E_{\lambda_1}^{d_{\mathrm{av}}} + E_{\lambda_2}^{d_{\mathrm{av}}} > E_{\lambda_1+\lambda_2}^{d_{\mathrm{av}}}
	\end{align*}
	whenever $E_{\lambda_1+\lambda_2}^{d_{\mathrm{av}}} < 0$, i.e. $E_{\lambda}^{d_{\mathrm{av}}}$ is \emph{strictly sub-additive} if the ground state energy is \emph{strictly negative}.
	
	As shown in \cite{CHL15}, the strict sub-additivity of the ground state energy prevents minimizing sequences from splitting, in particular, minimizing sequences can be shown to be tight modulo the natural symmetries of the problem (shifts for $d_{\mathrm{av}} >0$ or shifts and boosts for $d_{\mathrm{av}}=0$).
	
\end{remark}

\begin{proof}[Proof of Proposition \ref{prop:strictsubadd}]
Set
\begin{align*}
	\chi(a) \coloneq \exp\left(-\int_{a_0}^a \frac{\kappa(b)}{b} \,\mathrm{d}b \right)
\end{align*}
for some $0<a_0\leq a$. Then $\chi(a_0) = 1$, $\chi'(a) = -\frac{\kappa(a)}{a} \chi(a)$, and therefore
\begin{align}\label{eq:A2bound}
	\chi(a) V(a) - V(a_0) \geq 0,
\end{align}
since $(\chi V)'\geq 0$ by assumption \ref{ass:saturation}. Setting $a_0 = sa$ for some $s\in (0,1]$, we obtain
\begin{align*}
	V(sa) &\leq \exp\left(- \int_{sa}^a \frac{\kappa(b)}{b}\,\mathrm{d}b \right) V(a) = \exp\left(- \int_{s}^1 \frac{\kappa(a b)}{b}\,\mathrm{d}b \right) V(a)\\
	&\leq \exp\left( - \inf_{\beta\in(0,1]} \kappa(a\beta) \int_s^1 \frac{\mathrm{d}b}{b} \right) V(a) = s^{\kappa^*(a)} V(a).
\end{align*}
Using that, by assumption \ref{ass:saturation},
\begin{align*}
	\inf_{0 < b \leq a} \kappa(b) \geq \inf_{0 < b \leq A} \kappa(b) = \kappa^*(A) >2
\end{align*}
for any finite $A\geq a>0$, we get
\begin{align*}
	V(s a) \leq s^{\kappa^*(A)} V(a), \quad \text{for all} \quad s\in(0,1], \, 0<a\leq A.
\end{align*}

At this point the $L^{\infty}$ bound comes into play, which guarantees that we always stay in a regime where saturation is not reached, that is, $\kappa^*>2$!
	
Indeed, since $|T_r f(x)| \leq \|T_r f\|_{\infty}\leq C$ for almost all $r\in\mathrm{supp}\,\psi$, we get for $0<\mu\leq 1$,
	\begin{align*}
		N(\mu^{1/2} f) = \iint_{\R^2} V(\mu^{1/2} |T_r f(x)|)\,\mathrm{d}x\,\psi(r)\,\mathrm{d}r \leq \mu^{\kappa^*(C)/2} N(f),
	\end{align*}
	and thus
	\begin{align*}
		E_{\mu \lambda}^{d_{\mathrm{av}}} &= \inf_{\|f\|_2^2 = \mu \lambda} \left( \frac{d_{\mathrm{av}}}{2} \|f'\|_2^2 - N(f) \right) \geq \inf_{\|g\|_2^2 = \lambda} \left(\mu \frac{d_{\mathrm{av}}}{2} \|g'\|_2^2 - \mu^{\kappa^*(C)/2} N(g) \right) \\
		&\geq \mu^{\kappa^*(C)/2} E_{\lambda}^{d_{\mathrm{av}}}.
	\end{align*}

As in \cite[Proposition 3.3]{CHL15}, we can now take $\lambda_j = \mu_j \lambda$, $j=1,2$, with $\mu_1 + \mu_2 \leq 1$, $\mu_1, \mu_2 \geq \frac{\delta}{\lambda}$. It then follows that
\begin{align*}
	E_{\lambda_1}^{d_{\mathrm{av}}} + E_{\lambda_2}^{d_{\mathrm{av}}} = E_{\mu_1 \lambda}^{d_{\mathrm{av}}} + E_{\mu_2 \lambda}^{d_{\mathrm{av}}} \geq \left(\mu_1^{\kappa^*(C)/2} + \mu_2^{\kappa^*(C)/2}\right) E_{\lambda}^{d_{\mathrm{av}}},
\end{align*}
and, since the function $t\mapsto (1+t)^{\kappa^*(C)/2} - 1 - t^{\kappa^*(C)/2}$ is increasing on $[1,\infty)$, we have
\begin{align*}
	\mu_1^{\kappa^*(C)/2} + \mu_2^{\kappa^*(C)/2} \leq 1 - \left(2^{\frac{\kappa^*(C)}{2}} - 2\right) \left(\frac{\delta}{\lambda}\right)^{\frac{\kappa^*(C)}{2}} <1
\end{align*}
for $\delta >0$ and $\kappa^*(C)>2$.

Now, if $E_{\lambda}^{d_{\mathrm{av}}} \leq 0$, the sub-additivity
\begin{align*}
	E_{\lambda_1}^{d_{\mathrm{av}}} + E_{\lambda_2}^{d_{\mathrm{av}}} \geq \left[1 - \left(2^{\frac{\kappa^*(C)}{2}} - 2\right) \left(\frac{\delta}{\lambda}\right)^{\frac{\kappa^*(C)}{2}} \right] E_{\lambda}^{d_{\mathrm{av}}},
\end{align*}
follows.
\end{proof}

\subsection{Thresholds}

It turns out that under the assumptions \ref{ass:growth}--\ref{ass:strictpos} on the nonlinear potential $V$, minimizers for $E_{\lambda}^{d_{\mathrm{av}}}$ may only exist for large enough $\lambda$. This is due to the fact that minimizing sequences can be shown to be pre-compact modulo translations, respectively,  translations and modulations, \emph{if} the energy is \emph{strictly negative}. The reason for this is sub-additivity: the ground state energy is \emph{strictly} sub-additive only if $E_{\lambda}^{d_{\mathrm{av}}}<0$!

This motivates
\begin{definition}[Threshold]\label{def:threshold}
	\begin{align*}
		\lambda_{\mathrm{cr}}^{d_{\mathrm{av}}} \coloneq \inf\{ \lambda>0: E_{\lambda}^{d_{\mathrm{av}}} < 0 \}.
	\end{align*}
\end{definition}

Assume that $E_{\lambda}^{d_{\mathrm{av}}}\leq 0$ for all $\lambda>0$ and $d_{\mathrm{av}}\geq 0$ (see Remark \ref{rem:negativity} about the validity of this assumption).
By the sub-additivity of the ground state energy, it immediately follows that
\begin{align*}
	E_{\lambda_1}^{d_{\mathrm{av}}} \geq E_{\lambda_1}^{d_{\mathrm{av}}} + E_{\lambda_2}^{d_{\mathrm{av}}} \geq E_{\lambda_1+\lambda_2}^{d_{\mathrm{av}}},
\end{align*}
where the latter inequality is strict whenever $E_{\lambda_1+\lambda_2}^{d_{\mathrm{av}}}<0$. In particular, the map $0<\lambda\mapsto E_{\lambda}^{d_{\mathrm{av}}}$ is decreasing and strictly decreasing where $E_{\lambda}^{d_{\mathrm{av}}}<0$.

Thus, $E_{\lambda}^{d_{\mathrm{av}}} = 0$  if $0<\lambda<\lambda_{\mathrm{cr}}^{d_{\mathrm{av}}}$ and $E_{\lambda}^{d_{\mathrm{av}}}<0$ if $\lambda>\lambda_{\mathrm{cr}}^{d_{\mathrm{av}}}$.

\begin{lemma}\label{lem:threshold}
	If $V$ satisfies assumptions \ref{ass:saturation} and \ref{ass:strictpos}, then $\lambda_{\mathrm{cr}}^{d_{\mathrm{av}}} < \infty$.
\end{lemma}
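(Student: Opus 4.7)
The plan is to exhibit, for some sufficiently large $\lambda$, a trial function $f\in\mathcal{S}_\lambda^{d_{\mathrm{av}}}$ satisfying $H(f)<0$; by the definition of the threshold and the monotonicity of $\lambda\mapsto E_\lambda^{d_{\mathrm{av}}}$ already recorded above, this immediately yields $\lambda_{\mathrm{cr}}^{d_{\mathrm{av}}}\le\lambda<\infty$. The mechanism is that \ref{ass:saturation} forces $V$ to grow at least quadratically from any point of positivity, so on a trial function of mass $\lambda$ the nonlinearity $N$ will gain linearly in $\lambda$, which beats the kinetic term provided the latter is arranged to have a small linear coefficient.

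First I would extract a quadratic lower bound on $V$. Exactly as in the proof of Proposition~\ref{prop:strictsubadd}, the auxiliary function $\chi(a)=\exp\!\bigl(-\int_{a_*}^a\kappa(b)/b\,\mathrm{d}b\bigr)$ satisfies $(\chi V)'\ge 0$ by \ref{ass:saturation}, hence $\chi(a)V(a)\ge\chi(a_*)V(a_*)=V(a_*)>0$ for $a\ge a_*$ by \ref{ass:strictpos}; since $\kappa\ge 2$ gives $\chi(a)\le(a_*/a)^2$ on $[a_*,\infty)$, this yields
\begin{align*}
V(a)\ge\frac{V(a_*)}{a_*^2}\,a^2\qquad\text{for all }a\ge a_*.
\end{align*}

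Next I would choose the trial function. Fix a real-valued $g\in C_c^\infty(\R)$ with $\|g\|_2=1$; for a scale $L>0$ set $g_L(x)=L^{-1/2}g(x/L)$ (so $\|g_L\|_2=1$ and $\|g_L'\|_2^2=L^{-2}\|g'\|_2^2$) and $f_\lambda=\sqrt{\lambda}\,g_L\in\mathcal{S}_\lambda^{d_{\mathrm{av}}}$. The kinetic term is $\frac{d_{\mathrm{av}}}{2}\lambda L^{-2}\|g'\|_2^2$. Using the quadratic lower bound on $V$ together with the $L^2$-unitarity of $T_r$,
\begin{align*}
N(f_\lambda)\ge\frac{V(a_*)}{a_*^2}\,\lambda\Bigl(\|\psi\|_1-\iint|T_r g_L(x)|^2\,\mathbf{1}_{|T_r g_L(x)|<a_*/\sqrt{\lambda}}\,\mathrm{d}x\,\psi(r)\,\mathrm{d}r\Bigr),
\end{align*}
and the subtracted integral tends to $0$ as $\lambda\to\infty$ by dominated convergence: the indicator converges pointwise to $\mathbf{1}_{T_r g_L=0}$ on which the integrand vanishes, and is dominated by the integrable function $|T_r g_L(x)|^2$ (with total mass $\|\psi\|_1$).

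Putting the two estimates together, I would first choose $L$ large enough so that $\frac{d_{\mathrm{av}}}{2L^2}\|g'\|_2^2<\frac{V(a_*)\|\psi\|_1}{4a_*^2}$ (automatic when $d_{\mathrm{av}}=0$), and then $\lambda$ large enough so that the correction integral above is $\le\tfrac{1}{2}\|\psi\|_1$, obtaining
\begin{align*}
H(f_\lambda)\le\lambda\Bigl(\frac{d_{\mathrm{av}}\|g'\|_2^2}{2L^2}-\frac{V(a_*)\|\psi\|_1}{4a_*^2}\Bigr)<0,
\end{align*}
which completes the argument. The only delicate point is the low-amplitude region $\{|T_r f_\lambda|<a_*\}$, where the quadratic lower bound on $V$ is unavailable; this is precisely what the dominated convergence step controls, while the spatial dilation by $L$ creates the room needed to absorb the kinetic energy in the positive-dispersion case.
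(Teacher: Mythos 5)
Your proof is correct, and it rests on the same core mechanism as the paper's: the quadratic lower bound $V(a)\ge (a/a_*)^2\,V(a_*)$ for $a\ge a_*$, extracted from \ref{ass:saturation} and \ref{ass:strictpos} exactly as you do it, followed by a trial function whose nonlinear energy grows linearly in $\lambda$. The differences are in the trial function and in how the low-amplitude region is handled. The paper takes a Gaussian whose width is tied to the mass, $\sigma_0=\lambda$, so that $\|g_\lambda'\|_2^2=1$ and the kinetic term stays at the constant value $d_{\mathrm{av}}/2$; it then computes the free evolution explicitly to obtain the pointwise bound $|T_rg_\lambda|\ge A_0/2\sim\lambda^{1/4}$ for $|x|\le\sqrt{\lambda}$ and $r\in\mathrm{supp}\,\psi$, so that for large $\lambda$ the evolved trial function lies entirely above $a_*$ on a set of measure $\sim\sqrt{\lambda}$ and $N\gtrsim\lambda$ follows directly. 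You instead fix the profile and scale only the amplitude, which makes the kinetic term grow linearly in $\lambda$ and forces the preliminary dilation by $L$ to shrink its coefficient below the linear gain from $N$ --- a step the paper does not need --- but your dominated-convergence treatment of the set $\{|T_rf_\lambda|<a_*\}$ is cleaner and works for an arbitrary bump function, with no explicit computation of $T_rg$ and using only the $L^2$-unitarity of the free evolution. Both arguments are complete; yours is marginally more flexible in the choice of trial function, while the paper's choice keeps the kinetic contribution trivially under control.
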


\begin{proof}
$\lambda_{\mathrm{cr}}^{d_{\mathrm{av}}} < \infty$ if and only if $E_{\lambda}^{d_{\mathrm{av}}} <0$ for some $\lambda>0$. The claim therefore follows if we can find a suitable trial function with negative energy $H$, at least for large enough $\lambda>0$.

Observe that by \ref{ass:saturation}, we again have the bound \eqref{eq:A2bound} on $V$. Let $a_*>0$ be such that $V(a_*)>0$, which exists by \ref{ass:strictpos}. Then
\begin{align*}
	V(a) \geq \exp\left(\int_{a_*}^a \frac{\kappa(b)}{b}\,\mathrm{d}b \right) V(a_*)\, \1_{[a_*, \infty)}(a),
\end{align*}
where for $0<a<a_*$ we just used the fact that $V(a) \geq 0$. Since by \ref{ass:saturation}, $\inf_{b>0} \kappa(b) \geq 2$, we get the lower bound
\begin{align}\label{eq:Vlowerbound}
	V(a) \geq \left(\frac{a}{a_*}\right)^2 V(a_*) \, \1_{[a_*, \infty)}(a).
\end{align}

Consider now centered Gaussian test functions
 	\begin{align}\label{eq:gaussian}
 		g_{\sigma_{0}}(x) = A_0 \,
 				\mathrm{e}^{-\frac{x^2}{\sigma_0}}, \quad \sigma_0>0,
 	\end{align}
where $A_0 = \left(\frac{2\lambda^2}{\pi \sigma_0}\right)^{1/4}$ is chosen such that $\|g_{\sigma_0}\|_2^2 = \lambda$. Then $\|g'_{\sigma_0}\|_2^2 = \frac{\lambda}{\sigma_0}$ and the time evolution is given by
\begin{align}\label{eq:evolvedgaussian}
  	T_rg_{\sigma_0}(x) =  A_0 \left( \frac{\sigma_0}{\sigma(r)} \right)^{1/2} \mathrm{e}^{-\frac{x^2}{\sigma(r)}}, \quad \sigma(r)= \sigma_0+4\mathrm{i}r,
\end{align}
thus,
\begin{align*}
	|T_rg_{\sigma_0}(x)| =  A_0 \left( \frac{\sigma_0^2}{\sigma_0^2 + (4r)^2} \right)^{1/4} \mathrm{e}^{-\frac{\sigma_0 x^2}{\sigma_0^2 + (4r)^2}}.
\end{align*}
We therefore have $|T_rg_{\sigma_0}(x)| \leq A_0$ for all $x\in\R$ and $r\in \R$. 
If $|x|\leq \sqrt{\sigma_0}$, we also have the lower bound
\begin{align*}
	|T_rg_{\sigma_0}(x)| \geq A_0 \left( \frac{\sigma_0^2}{\sigma_0^2 + (4r)^2} \right)^{1/4} \mathrm{e}^{-\frac{\sigma_0^2}{\sigma_0^2 + (4r)^2}},
\end{align*}
hence choosing $R>0$ such that $\mathrm{supp}\,\psi \subset [-R, R]$, we have 
\begin{align*}
	\frac{A_0}{2} \leq |T_rg_{\sigma_0}(x)| \leq A_0
\end{align*}
for all $|x| \leq \sqrt{\sigma_0}$ and all $|r|\leq R$, assuming $\sigma_0>4R$.

Now set $\sigma_0 = \lambda$ for $\lambda$ large enough. Then $\|g_{\lambda}'\|_2 = 1$ and $A_0 = \left(\frac{2\lambda}{\pi}\right)^{1/4}$.
It follows with \eqref{eq:Vlowerbound} that
\begin{align*}
	\int_{\R} V(|T_rg_{\lambda}(x)|)\,\mathrm{d}x
	&= \int_{|x|\leq \sqrt{\lambda}} V(|T_rg_{\lambda}(x)|)\,\mathrm{d}x + \int_{|x|>\sqrt{\lambda}} V(|T_rg_{\lambda}(x)|)\,\mathrm{d}x \\
	&\geq \int_{|x|\leq \sqrt{\lambda}} \left(\frac{|T_r g_{\lambda}(x)|}{a_*}\right)^2 V(a_*)\, \1_{[a_*, \infty)}(|T_r g_{\lambda}(x)|)\, \mathrm{d}x \\
	&\geq 2\sqrt{\lambda} \left(\frac{A_0}{2a_*} \right)^2 V(a_*) \, \1_{[a_*, \infty)}\big(\tfrac{A_0}{2}\big),
\end{align*}
so for $\lambda$ large enough, since $A_0 \sim \lambda^{1/4}$,
\begin{align*}
	N(g_{\lambda}) = \iint_{\R^2} V(|T_rg_{\lambda}(x)|)\,\mathrm{d}x\,\psi\mathrm{d}r \gtrsim \lambda 
\end{align*}
and the energy is bounded by
\begin{align*}
	H(g_{\lambda}) &= \frac{d_\mathrm{av}}{2} \|g_{\lambda}'\|_2^2 - N(g_{\lambda})
	\leq \frac{d_\mathrm{av}}{2} - C \lambda, 
\end{align*}
for some constant $C>0$. Thus, choosing $\lambda>0$ large enough, we can always achieve $H(g_{\lambda})<0$, so $E_{\lambda}^{d_{\mathrm{av}}} = \inf_{\|f\|_2^2 = \lambda} H(f) \leq H(g_{\lambda}) <0$.
\end{proof}

\begin{lemma}
	If $V$ satisfies assumptions \ref{ass:growth}, \ref{ass:saturation}, and  \ref{ass:zerothreshold}, then $\lambda_{\mathrm{cr}}^{d_{\mathrm{av}}} = 0$ for all $d_{\mathrm{av}}\ge 0$.
\end{lemma}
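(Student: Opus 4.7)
The goal is to prove $E_\lambda^{d_\mathrm{av}}<0$ for every $\lambda>0$, which, together with the monotonicity of $\lambda\mapsto E_\lambda^{d_\mathrm{av}}$ already noted above, gives $\lambda_\mathrm{cr}^{d_\mathrm{av}}=0$. My plan is to reuse the centered Gaussian trial family $g_{\sigma_0}$ from the proof of Lemma~\ref{lem:threshold}, but now tune $\sigma_0$ to be \emph{large} rather than equal to $\lambda$: large $\sigma_0$ drives the peak amplitude $A_0=(2\lambda^2/(\pi\sigma_0))^{1/4}$ to zero, so the evolved profile stays inside the regime where \ref{ass:zerothreshold} provides a genuine lower bound on $V$.

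For $d_\mathrm{av}=0$ the energy reduces to $H(g_{\sigma_0})=-N(g_{\sigma_0})$, so it is enough to exhibit a trial function with $N>0$. Choosing $\sigma_0$ so large that $A_0\leq\epsilon$ keeps $|T_r g_{\sigma_0}(x)|\in(0,\epsilon]$; by \ref{ass:zerothreshold}, $V>0$ on this interval, and since the evolved Gaussian is strictly positive pointwise, the integrand in $N$ is strictly positive almost everywhere. Integrability is inherited from \ref{ass:growth} and Gaussian decay. Hence $N(g_{\sigma_0})>0$ and $H(g_{\sigma_0})<0$.

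For $d_\mathrm{av}>0$ one must also outweigh the kinetic term $\tfrac{d_\mathrm{av}}{2}\|g_{\sigma_0}'\|_2^2=\tfrac{d_\mathrm{av}\lambda}{2\sigma_0}$. Using the explicit formula \eqref{eq:evolvedgaussian} and restricting to $r\in\mathrm{supp}\,\psi\subset[-R,R]$ with $\sigma_0\gg R$, one has $|T_rg_{\sigma_0}(x)|\gtrsim A_0$ uniformly for $|x|\lesssim\sqrt{\sigma_0}$. Combined with the power-law bound $V(a)\gtrsim a^{\gamma_0}$ on $(0,\epsilon]$ from \ref{ass:zerothreshold}, this gives
\begin{align*}
N(g_{\sigma_0})\gtrsim A_0^{\gamma_0}\sqrt{\sigma_0}\sim\lambda^{\gamma_0/2}\,\sigma_0^{(2-\gamma_0)/4},
\end{align*}
valid as soon as $\sigma_0\gtrsim\lambda^2/\epsilon^4$ (so that $A_0\leq\epsilon$). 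The ratio of nonlinearity to kinetic energy is then of order $\sigma_0^{(6-\gamma_0)/4}\lambda^{(\gamma_0-2)/2}$, and both exponents are strictly positive precisely because $2<\gamma_0<6$. Taking $\sigma_0$ sufficiently large (depending on $\lambda$, $d_\mathrm{av}$, and $\epsilon$) therefore forces $H(g_{\sigma_0})<0$.

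The only non-trivial step is the scaling balance in the $d_\mathrm{av}>0$ case, and this is exactly where the upper bound $\gamma_0<6$ in \ref{ass:zerothreshold} is used: Strichartz-type spreading of the Gaussian forces the nonlinear contribution to decay like $\sigma_0^{(2-\gamma_0)/4}$ as $\sigma_0\to\infty$, and to beat the kinetic decay $\sim\sigma_0^{-1}$ one needs $(6-\gamma_0)/4>0$. This is the same mass-subcritical exponent threshold that appears in the classical local NLS on $\R$.
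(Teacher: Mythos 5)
Your proposal is correct and follows essentially the same route as the paper: the same large-width Gaussian trial functions with $A_0=(2\lambda^2/(\pi\sigma_0))^{1/4}\le\epsilon$, giving $H=-N<0$ immediately for $d_{\mathrm{av}}=0$, and for $d_{\mathrm{av}}>0$ the same scaling comparison $N(g_{\sigma_0})\gtrsim\lambda^{\gamma_0/2}\sigma_0^{(2-\gamma_0)/4}$ versus the kinetic term $d_{\mathrm{av}}\lambda/(2\sigma_0)$, with $\gamma_0<6$ making the ratio blow up as $\sigma_0\to\infty$. The only cosmetic difference is that the paper evaluates $\iint |T_rg_{\sigma_0}|^{\gamma_0}\,\mathrm{d}x\,\psi\,\mathrm{d}r$ exactly while you use the pointwise lower bound $|T_rg_{\sigma_0}|\gtrsim A_0$ on $|x|\le\sqrt{\sigma_0}$; both yield the identical exponent.
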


\begin{proof}
Let $\lambda>0$. We begin with $d_{\mathrm{av}}=0$, that is, assume that there exists $\epsilon>0$ such that $V(a)>0$ for all $0<a\leq \epsilon$. Let $g_{\sigma_0}$ be the centered Gaussian \eqref{eq:gaussian} with $\|g_{\sigma_0}\|_2^2 = \lambda$. Then, by \eqref{eq:evolvedgaussian},
\begin{align}
  	|T_rg_{\sigma_0}(x)| \leq A_0 = \left(\frac{2\lambda^2}{\pi \sigma_0}\right)^{1/4}
\end{align}
for all $x\in\R$ and $r\in \R $.
Choosing $\sigma_0$ large enough, we can make $|T_rg_{\sigma_0}(x)| \leq \epsilon$, which implies $H(g_{\sigma_0}) = -N(g_{\sigma_0}) <0$ by \ref{ass:zerothreshold}, so $E_{\lambda}^{d_{\mathrm{av}}} < 0$. Since $\lambda>0$ was arbitrary, it follows that $\lambda_{\mathrm{cr}}^0 = 0$.

For $d_{\mathrm{av}}>0$ assume that there exist $\epsilon>0$ and $2<\gamma_0<6$ such that $V(a)\gtrsim a^{\gamma_0}$ for all $0<a\leq \epsilon$. We consider the same centered Gaussian $g_{\sigma_0}$ as above, with $\sigma_0$ so large that $|T_rg_{\sigma_0}(x)| \leq \epsilon$. It follows that
\begin{align*}
	N(g_{\sigma_0}) &= \iint_{\R^2} V(|T_r g_{\sigma_0}(x)|) \,\mathrm{d}x\,\psi\,\mathrm{d}r \gtrsim \iint_{\R^2} |T_r g_{\sigma_0}(x)|^{\gamma_0}\,\mathrm{d}x\,\psi\mathrm{d}r \\
	&= \left( \frac{\pi}{\gamma_0} \right)^{1/2} \left( \frac{2\lambda^2}{\pi} \right)^{\gamma_0/4} \sigma_0^{\frac{2-\gamma_0}{4}} \int_{\R} \frac{\psi(r)}{\left[ 1 + (4r/\sigma_0)^2 \right]^{\frac{\gamma_0-2}{4}}} \,\mathrm{d}r.
\end{align*}
Since $\|g'_{\sigma_0}\|_2^2 = \frac{\lambda}{\sigma_0}$, the energy of the Gaussian $g_{\sigma_0}$ is bounded by
\begin{align*}
	H(g_{\sigma_0}) \leq 
\frac{d_{\mathrm{av}} \lambda}{2\sigma_0} 
\left[ 1- \frac{C}{d_{\mathrm{av}} \lambda} \left( \frac{\pi}{\gamma_0} \right)^{1/2} \left( \frac{2\lambda^2}{\pi} \right)^{\gamma_0/4} \sigma_0^{\frac{6-\gamma_0}{4}} \int_{\R} \frac{\psi(r)}{\left[ 1 + (4r/\sigma_0)^2 \right]^{\frac{\gamma_0-2}{4}}} \,\mathrm{d}r \right]
\end{align*}
for some constant $C>0$. In particular, since $2<\gamma_0<6$ and
\begin{align*}
	\int_{\R} \frac{\psi(r)}{\left[ 1 + (4r/\sigma_0)^2 \right]^{\frac{\gamma_0-2}{4}}} \,\mathrm{d}r \to \|\psi\|_1 >0
\end{align*}
as $\sigma_0 \to \infty$ by Lebesgue's dominated convergence theorem, we can make $\sigma_0$ sufficiently large, such that $H(g_{\sigma_0}) < 0$. As $\lambda>0$ was arbitrary, this yields $\lambda_{\mathrm{cr}}^{d_{\mathrm{av}}} = 0$.
\end{proof}

The following quantity will be useful in proving the non-existence of minimizers in the positive average dispersion case for sub-critical $0<\lambda <\lambda_{\mathrm{cr}}^{d_{\mathrm{av}}}$. Fix $C>0$ and define
\begin{align*}
	R_C(\lambda):= \sup\left\{ \frac{N(\sqrt{\lambda} h)}{\lambda \|h'\|_2^2}: h\in H^1(\R)\setminus \{0\}, \|h\|_2 = 1, \|h'\|_2 \leq C \right\}.
\end{align*}

\begin{lemma}\label{lem:scalingR}
	Let $C>0$. If $V$ satisfies assumption \ref{ass:saturation}, then
	\begin{align*}
		R_C (\lambda) \geq \left( \frac{\lambda}{\lambda_0} \right)^{\frac{1}{2} \kappa^*\left(\sqrt{\lambda C}\right) - 1} R_C (\lambda_0)
	\end{align*}
	for all $\lambda\geq \lambda_0 > 0$, with $\kappa^*\left(\sqrt{\lambda C}\right) = \inf_{a\leq \sqrt{\lambda C}} \kappa(a)>2$.
\end{lemma}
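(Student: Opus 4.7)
The plan is to derive this scaling inequality as a direct consequence of the pointwise lower bound on $V$ obtained in the proof of Proposition \ref{prop:strictsubadd}, combined with the Sobolev-type $L^\infty$ control that comes from the constraint $\|h\|_2=1$, $\|h'\|_2\le C$.

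First I would fix $h\in H^1(\R)\setminus\{0\}$ with $\|h\|_2=1$ and $\|h'\|_2\le C$, and establish the crucial $L^\infty$ bound on the orbit $r\mapsto T_r(\sqrt{\lambda}\,h)$: since $T_r$ is unitary on $L^2$ and commutes with $\partial_x$, the estimate $\|g\|_\infty\le \|g\|_2^{1/2}\|g'\|_2^{1/2}$ used in Lemma \ref{lem:differentiability} yields
\begin{align*}
	|T_r(\sqrt{\lambda}\,h)(x)| \;\le\; \sqrt{\lambda}\,\|h\|_2^{1/2}\|h'\|_2^{1/2} \;\le\; \sqrt{\lambda C}
\end{align*}
for all $x,r\in\R$. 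This is what puts the argument of $V$ into a regime where $\kappa^*(\sqrt{\lambda C})>2$, so that the pointwise inequality on $V$ has useful content.

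Next I would invoke the inequality established in the proof of Proposition \ref{prop:strictsubadd}: for every $A>0$, $s\in(0,1]$, and $0<a\le A$,
\begin{align*}
	V(sa) \;\le\; s^{\kappa^*(A)}\, V(a).
\end{align*}
Setting $\mu \coloneq \lambda/\lambda_0\ge 1$, $s=\mu^{-1/2}\in(0,1]$, $A=\sqrt{\lambda C}$, and $a=\sqrt{\lambda}\,|T_r h(x)|\le\sqrt{\lambda C}$ gives, after rearrangement,
\begin{align*}
	V\bigl(\sqrt{\lambda}\,|T_r h(x)|\bigr) \;\ge\; \mu^{\kappa^*(\sqrt{\lambda C})/2}\; V\bigl(\sqrt{\lambda_0}\,|T_r h(x)|\bigr).
\end{align*}
Integrating against $\mathrm{d}x\,\psi(r)\mathrm{d}r$ yields the scaling
\begin{align*}
	N(\sqrt{\lambda}\,h)\;\ge\; \Bigl(\tfrac{\lambda}{\lambda_0}\Bigr)^{\kappa^*(\sqrt{\lambda C})/2}\,N(\sqrt{\lambda_0}\,h).
\end{align*}

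Finally, dividing by $\lambda\|h'\|_2^2$ converts this into
\begin{align*}
	\frac{N(\sqrt{\lambda}\,h)}{\lambda\|h'\|_2^2}\;\ge\; \Bigl(\tfrac{\lambda}{\lambda_0}\Bigr)^{\kappa^*(\sqrt{\lambda C})/2-1}\, \frac{N(\sqrt{\lambda_0}\,h)}{\lambda_0\|h'\|_2^2},
\end{align*}
and taking the supremum over admissible $h$ on both sides (the admissibility conditions $\|h\|_2=1$, $\|h'\|_2\le C$ are independent of $\lambda$) produces the claimed bound. There is no real obstacle here; the only subtlety is to verify that the $L^\infty$ cap $\sqrt{\lambda C}$ is uniform in $r$ and in the argument $\sqrt{\lambda_0}|T_r h(x)|$ as well (automatic because $\lambda_0\le\lambda$), so that assumption \ref{ass:saturation} can be applied with the same $\kappa^*(\sqrt{\lambda C})>2$ throughout the integration.
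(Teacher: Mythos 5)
Your argument is correct, but it takes a genuinely different route from the paper. The paper defines $A(s)=s^{-2}N(sh)$, computes $A'(s)$ via the directional derivative formula of Lemma \ref{lem:differentiability}, uses \ref{ass:saturation} in the form $sD_hN(sh)-2N(sh)\geq(\kappa^*(t\sqrt{C})-2)N(sh)$, and integrates the resulting differential inequality in $s$; that is, it integrates the ODE coming from \ref{ass:saturation} \emph{after} the $\mathrm{d}x\,\psi\mathrm{d}r$ integration, at the level of $N$. You instead integrate the same ODE \emph{before} integrating in $x$ and $r$: you reuse the pointwise bound $V(sa)\leq s^{\kappa^*(A)}V(a)$ already derived in the proof of Proposition \ref{prop:strictsubadd}, apply it under the integral with $a=\sqrt{\lambda}\,|T_rh(x)|\leq\sqrt{\lambda C}$ and $s=(\lambda_0/\lambda)^{1/2}$, and then divide and take suprema. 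The underlying mechanism (Gronwall on $\kappa$, plus the $L^\infty$ cap $\|T_r(\sqrt{\lambda}h)\|_\infty\leq\sqrt{\lambda C}$ that keeps $\kappa^*>2$) is identical, but your organization buys something: it never invokes the differentiability of $N$ or the formula for $D_hN$, only the elementary pointwise inequality for $V$, and it makes Lemma \ref{lem:scalingR} a direct corollary of the same scaling estimate used for sub-additivity. The paper's version is marginally more self-contained in that it works directly with the quantity $N(sh)$ being compared, but both proofs are complete; your closing remark is right that the condition $\sqrt{\lambda_0}|T_rh(x)|\leq\sqrt{\lambda C}$ is automatic since only the larger argument needs to lie below the cap, and the case $T_rh(x)=0$ is harmless because $V(0)=0$.
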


This scaling property immediately implies

\begin{corollary}\label{cor:thresholds} Let $C>0$ and assume that $V$ obeys assumption \ref{ass:saturation}.
	If $\lambda_{\mathrm{cr}}^{d_{\mathrm{av}}} > 0$, then
	\begin{align*}
		R_C (\lambda) < \frac{d_{\mathrm{av}}}{2} \quad \text{for all} \quad 0<\lambda<\lambda_{\mathrm{cr}}^{d_{\mathrm{av}}}.
	\end{align*}
\end{corollary}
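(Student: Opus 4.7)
The plan is to combine two pieces: a non-strict upper bound $R_C(\lambda) \leq d_{\mathrm{av}}/2$ that follows directly from the definition of the subcritical regime, and the strict growth of $R_C$ supplied by Lemma \ref{lem:scalingR}, to promote the inequality to a strict one.

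First I would establish the weak bound. For any $0 < \lambda < \lambda_{\mathrm{cr}}^{d_{\mathrm{av}}}$, the discussion preceding Lemma \ref{lem:threshold} gives $E_{\lambda}^{d_{\mathrm{av}}} = 0$. Take any admissible test function $h\in H^1(\R)\setminus\{0\}$ with $\|h\|_2 = 1$ and $\|h'\|_2 \leq C$ (note $\|h'\|_2 > 0$ since a non-zero constant is not in $L^2(\R)$), and set $f \coloneq \sqrt{\lambda}\, h$. Then $\|f\|_2^2 = \lambda$ and $\|f'\|_2^2 = \lambda \|h'\|_2^2$, so
\begin{align*}
0 = E_{\lambda}^{d_{\mathrm{av}}} \leq H(f) = \frac{d_{\mathrm{av}}}{2}\,\lambda \|h'\|_2^2 - N(\sqrt{\lambda}\, h),
\end{align*}
which rearranges to $\frac{N(\sqrt{\lambda}\, h)}{\lambda \|h'\|_2^2} \leq \frac{d_{\mathrm{av}}}{2}$. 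Taking the supremum over admissible $h$ yields $R_C(\lambda) \leq \frac{d_{\mathrm{av}}}{2}$ for every $0<\lambda<\lambda_{\mathrm{cr}}^{d_{\mathrm{av}}}$.

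To upgrade to strict inequality I would argue by contradiction. Suppose $R_C(\lambda_0) = \frac{d_{\mathrm{av}}}{2}$ for some $\lambda_0 \in (0, \lambda_{\mathrm{cr}}^{d_{\mathrm{av}}})$. Since $\lambda_{\mathrm{cr}}^{d_{\mathrm{av}}} > 0$ by hypothesis, we may pick $\lambda_1 \in (\lambda_0, \lambda_{\mathrm{cr}}^{d_{\mathrm{av}}})$. Lemma \ref{lem:scalingR} then gives
\begin{align*}
R_C(\lambda_1) \geq \left(\frac{\lambda_1}{\lambda_0}\right)^{\frac{1}{2}\kappa^{*}(\sqrt{\lambda_1 C}) - 1} R_C(\lambda_0).
\end{align*}
By assumption \ref{ass:saturation}, $\kappa^{*}(\sqrt{\lambda_1 C}) > 2$, so the exponent is strictly positive; together with $\lambda_1/\lambda_0 > 1$ this gives $R_C(\lambda_1) > R_C(\lambda_0) = \frac{d_{\mathrm{av}}}{2}$. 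But $\lambda_1 < \lambda_{\mathrm{cr}}^{d_{\mathrm{av}}}$ also lies in the subcritical regime, so the first step gives $R_C(\lambda_1) \leq \frac{d_{\mathrm{av}}}{2}$, a contradiction.

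There is no real obstacle here: once Lemma \ref{lem:scalingR} is in hand, the only thing to verify is the strict positivity of the exponent $\frac{1}{2}\kappa^{*}(\sqrt{\lambda_1 C}) - 1$, which is ensured by the compact-interval clause in \ref{ass:saturation}. The mild point to keep track of is that the argument uses both subcriticality of $\lambda_0$ (to start the contradiction) and subcriticality of some strictly larger $\lambda_1$ (to close it), which is precisely why the hypothesis $\lambda_{\mathrm{cr}}^{d_{\mathrm{av}}} > 0$ is invoked rather than merely $\lambda_{\mathrm{cr}}^{d_{\mathrm{av}}} \geq \lambda$.
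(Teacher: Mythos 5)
Your proof is correct and follows essentially the same route as the paper: both arguments hinge on Lemma \ref{lem:scalingR} to force strict growth of $R_C$ between two subcritical values of $\lambda$, combined with the observation that $R_C(\lambda)\le d_{\mathrm{av}}/2$ is equivalent to $E_{\lambda}^{d_{\mathrm{av}}}\ge 0$ on the subcritical range. The only difference is organizational — you first isolate the non-strict bound from $E_{\lambda}^{d_{\mathrm{av}}}=0$ and then upgrade it, whereas the paper runs the contradiction directly by producing a trial function with $H<0$ at the larger parameter $\lambda_2$ — and your version is, if anything, slightly cleaner at the step where the paper pulls $\lambda_2\|g'\|_2^2$ out of the infimum.
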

\begin{proof}
	Let $\lambda_{\mathrm{cr}}^{d_{\mathrm{av}}}>0$, and assume that there exists $0<\lambda_1<\lambda_{\mathrm{cr}}^{d_{\mathrm{av}}}$ such that $R_C(\lambda_1) \geq \frac{d_{\mathrm{av}}}{2}$. Pick $\lambda_2 \in (\lambda_1, \lambda_{\mathrm{cr}}^{d_{\mathrm{av}}})$, then Lemma \ref{lem:scalingR} implies
	\begin{align*}
		R_C(\lambda_2) \geq \left( \frac{\lambda_2}{\lambda_1} \right)^{\frac{1}{2} \kappa^*\left(\sqrt{\lambda_2 C}\right) - 1} R_C (\lambda_1) > R_C (\lambda_1) \geq \frac{d_{\mathrm{av}}}{2},
	\end{align*}
	since $\kappa^*\left(\sqrt{\lambda C}\right) >2$.
	In particular,
	\begin{align*}
		E_{\lambda_2}^{d_{\mathrm{av}}}
		&= \inf_{\|g\|_2=1} \left( \frac{d_{\mathrm{av}}}{2} \lambda_2 \|g'\|_2^2 - N(\sqrt{\lambda_2} g) \right)
		\leq \inf_{\substack{\|g\|_2=1 \\ \|g'\|_2 \leq C}} \left( \frac{d_{\mathrm{av}}}{2} \lambda_2 \|g'\|_2^2 - N(\sqrt{\lambda_2} g) \right) \\
		&= \inf_{\substack{\|g\|_2=1 \\ \|g'\|_2 \leq C}} \lambda_2 \|g'\|_2^2 \left( \frac{d_{\mathrm{av}}}{2}  - \frac{N(\sqrt{\lambda_2} g)}{\lambda_2 \|g'\|_2^2} \right)  \leq \lambda_2  C^2 \inf_{\substack{\|g\|_2=1 \\ \|g'\|_2 \leq C}} \left( \frac{d_{\mathrm{av}}}{2}  - \frac{N(\sqrt{\lambda_2} g)}{\lambda_2 \|g'\|_2^2} \right)   \\
		&= \lambda_2  C^2 \left(\frac{d_{\mathrm{av}}}{2} - \sup_{\substack{\|g\|_2=1 \\ \|g'\|_2 \leq C}} \frac{N(\sqrt{\lambda_2} g)}{\lambda_2 \|g'\|_2^2} \right) = \lambda_2  C^2 \left(\frac{d_{\mathrm{av}}}{2} - R_C(\lambda_2) \right) < 0,
	\end{align*}
	in contradiction to $\lambda_2 < \lambda_{\mathrm{cr}}^{d_{\mathrm{av}}}$ and the definition of $\lambda_{\mathrm{cr}}^{d_{\mathrm{av}}}$.
\end{proof}

\begin{proof}[Proof of Lemma \ref{lem:scalingR}]
	Let $h\in H^1\setminus \{0\}$ with $\|h\|_2 = 1$ and $\|h'\|_2 \leq C$, and define the function
	\begin{align*}
		A(s):= s^{-2} N(sh) 
	\end{align*}
	for $s>0$. 
	Then
	\begin{align*}
		A'(s) = s^{-3} \left( s D_h N(sh) - 2 N(sh) \right),
	\end{align*}
	and by assumption \ref{ass:saturation},
	\begin{align*}
		s D_h N(sh) - 2 N(sh) &= \iint_{\R^2} \Bigl[ V'(|T_r(sh)|) |T_r(sh)| - 2 V(|T_r(sh)|) \Bigr] \,\mathrm{d}x\,\psi\,\mathrm{d}r \\
		&\geq \iint_{\R^2} \Bigl[ \kappa(|T_r(sh)|) - 2 \Bigr] V(|T_r(sh)|)\,\mathrm{d}x\,\psi\,\mathrm{d}r.
	\end{align*}
	Since for any $f\in H^1(\R)$ the simple inequality $\|f\|_{\infty}^2 \leq \|f\|_2 \|f'\|_2$ holds, we get
	\begin{align*}
		\|T_r(sh)\|_{\infty} = s \|T_r h\|_{\infty} \leq s \|T_r h\|_2^{1/2} \|T_r h'\|_2^{1/2} = s \|h\|_2^{1/2} \|h'\|_2^{1/2} \leq s \sqrt{C},
	\end{align*}
	where we made use of the fact that $T_r$ commutes with differentiation and is unitary on $L^2$, as well as the properties of $h$.
	It follows that
	\begin{align*}
		s D_h N(sh) - 2 N(sh) &\geq \left(\inf_{a\leq s\sqrt{C}} \kappa(a)-2 \right) N(sh) \\
		&\geq \left(\inf_{a\leq t\sqrt{C}} \kappa(a)-2 \right) N(sh) = \left(\kappa^*\big(t\sqrt{C}\big) -2 \right) N(sh)
	\end{align*}
	for all $0<s \leq t$, $t>0$, and thus the function $A$ satisfies the differential inequality
	\begin{align*}
		A'(s) \geq \left(\kappa^*\big(t\sqrt{C}\big)-2 \right) \, s^{-1}\, A(s),
	\end{align*}
	which yields
	\begin{align*}
		A(t) \geq \left(\frac{t}{t_0}\right)^{\kappa^*\big(t\sqrt{C}\big)-2} \, A(t_0)
	\end{align*}
	for any $t\geq t_0 > 0$. In particular, we have
	\begin{align*}
		R_{C}(\lambda) = \sup_{\substack{\|h\|_2 = 1 \\ \|h'\|_2 \leq C}} \frac{N(\sqrt{\lambda} h)}{\lambda \|h'\|_2^2}
		&\geq \left(\frac{\lambda}{\lambda_0}\right)^{\frac{1}{2}\kappa^*\big(\sqrt{\lambda C}\big)-1} \sup_{\substack{\|h\|_2 = 1 \\ \|h'\|_2 \leq C}} \frac{N(\sqrt{\lambda_0} h)}{\lambda_0 \|h'\|_2^2} \\
		&= \left(\frac{\lambda}{\lambda_0}\right)^{\frac{1}{2}\kappa^*\big(\sqrt{\lambda C}\big)-1} R_C(\lambda_0)
	\end{align*}
	for all $\lambda \geq \lambda_0 > 0$.
\end{proof}

\subsection{Existence of minimizers for zero average dispersion}
We start by establishing the existence of minimizers in the singular case $d_{\mathrm{av}} = 0$. Throughout this section, we assume that \ref{ass:growth}, \ref{ass:saturation}, and \ref{ass:strictpos} hold with $3\le\gamma_1\leq\gamma_2< 5$, and that $\psi$ is compactly supported with $\psi \in L^{\frac{4}{5-\gamma_2}+\delta}$ for some $\delta>0$. This $L^p$ condition on $\psi$ ensures that the $L^p$ condition in \cite{CHL15} holds, in particular, all their multilinear estimates and splitting estimates continue to hold in our setting.

\begin{proposition}\label{prop:wellposedness-zero}
For any $\lambda>0$, the energy functional $H = -N$ is bounded below on $\mathcal{S}_{\lambda}^0$ and
\begin{align*}
		-\infty < E_{\lambda}^0 \leq 0.
\end{align*}	
\end{proposition}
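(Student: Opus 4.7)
The proposition splits into two parts: an upper bound $E_\lambda^0 \leq 0$ and a lower bound $E_\lambda^0 > -\infty$, which together also say that $H$ is bounded below on $\mathcal{S}_\lambda^0$. The plan is to dispatch each one with a short direct argument; neither requires the concentration-compactness machinery developed later.

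For the upper bound I would simply observe that $V: [0,\infty) \to [0,\infty)$ by hypothesis, so $N(f) \geq 0$ for every $f \in L^2(\mathbb{R}; \mathbb{C})$, and hence $H(f) = -N(f) \leq 0$ on all of $\mathcal{S}_\lambda^0$. Taking the infimum gives $E_\lambda^0 \leq 0$ immediately, with no information about $\lambda$ or $\psi$ needed.

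For the lower bound I would use assumption \ref{ass:growth} to integrate the pointwise derivative bound $|V'(a)| \lesssim a^{\gamma_1 - 1} + a^{\gamma_2 - 1}$ from $0$ to $a$, using $V(0)=0$, to conclude
\begin{align*}
	V(a) \lesssim a^{\gamma_1} + a^{\gamma_2} \quad \text{for all } a \geq 0.
\end{align*}
Substituting this into the definition of $N$ gives
\begin{align*}
	N(f) \lesssim \iint_{\mathbb{R}^2} \bigl(|T_r f(x)|^{\gamma_1} + |T_r f(x)|^{\gamma_2}\bigr)\, \mathrm{d}x\, \psi(r)\,\mathrm{d}r.
\end{align*}
Since $3 \leq \gamma_1 \leq \gamma_2 < 5 \subset [2,6]$, Lemma \ref{lem:st-boundedness} applies with $q = \gamma_1$ and $q = \gamma_2$. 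The integrability hypothesis $\psi \in L^{4/(5 - \gamma_2) + \delta}$ combined with the compact support of $\psi$ ensures $\psi \in L^{4/(6 - \gamma_2)} \subset L^{4/(6 - \gamma_1)}$, so both applications of the lemma yield
\begin{align*}
	N(f) \lesssim \|f\|_2^{\gamma_1} + \|f\|_2^{\gamma_2} = \lambda^{\gamma_1/2} + \lambda^{\gamma_2/2} \eqcolon C(\lambda) < \infty
\end{align*}
uniformly on $\mathcal{S}_\lambda^0$. Consequently $H(f) = -N(f) \geq -C(\lambda)$ on $\mathcal{S}_\lambda^0$, giving boundedness from below and $E_\lambda^0 \geq -C(\lambda) > -\infty$.

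There is no real obstacle here; the only thing that needs care is verifying that the integrability exponents assumed in the hypotheses of Theorem \ref{thm:zero} are actually compatible with the range of exponents required by Lemma \ref{lem:st-boundedness}, which comes down to the elementary inequality $4/(6 - \gamma_2) < 4/(5 - \gamma_2)$ together with the compactness of $\mathrm{supp}\,\psi$.
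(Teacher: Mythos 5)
Your proposal is correct and follows essentially the same route as the paper: integrate the derivative bound in \ref{ass:growth} to get $V(a)\lesssim a^{\gamma_1}+a^{\gamma_2}$, apply Lemma \ref{lem:st-boundedness} to bound $N(f)\lesssim \|f\|_2^{\gamma_1}+\|f\|_2^{\gamma_2}$, and use $V\geq 0$ for the upper bound $E_\lambda^0\leq 0$. Your extra remark checking that $\psi\in L^{4/(5-\gamma_2)+\delta}$ with compact support implies the $L^{4/(6-\gamma)}$ integrability needed in the lemma is a detail the paper leaves implicit, and it is verified correctly.
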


\begin{proof}
Let $\lambda>0$. Integrating the bound on $V'$ in \ref{ass:growth} yields
\begin{align}\label{eq:Vbound}
	|V(a)| \lesssim a^{\gamma_1} + a^{\gamma_2}
\end{align}
	and therefore
\begin{align*}
	N(f) \lesssim \int_{\R} \|T_rf\|_{\gamma_1}^{\gamma_1}\,\psi\mathrm{d}r + \int_{\R} \|T_rf\|_{\gamma_2}^{\gamma_2}\,\psi\mathrm{d}r \lesssim \|f\|_2^{\gamma_1} + \|f\|_2^{\gamma_2}
\end{align*}
	by Lemma \ref{lem:st-boundedness}. It follows that
\begin{align*}
	E_{\lambda}^0 = \inf_{\|f\|_2^2 = \lambda} H(f) = - \sup_{\|f\|_2^2 = \lambda} N(f) \gtrsim - \left(\lambda^{\frac{\gamma_1}{2}} + \lambda^{\frac{\gamma_2}{2}} \right) >-\infty .
\end{align*}
Since $V(a) \geq 0$ for any $a>0$, clearly $H(f) = -N(f) \leq 0$ for any $f\in \mathcal{S}_{\lambda}^0$ and therefore $E_{\lambda}^0 \leq 0$.
\end{proof}

The following lemma is a generalization of a result by \textsc{Kunze} \cite[Lemma 2.12]{Kun04}, and establishes $L^{\infty}$ bounds on the time evolved gradient of $H$.
\begin{lemma}\label{lem:kunzebound}
	Let $f \in L^2(\R)$, $3\leq \gamma_1 \leq \gamma_2 < 5$, $\psi\in L^{\frac{4}{5-\gamma_2}+\delta}$ for some $\delta>0$, and  $\psi$ compactly supported.
	Then $T_s \nabla H(f) \in L^{\infty}(\R)$ and
	\begin{align}\label{eq:kunzebound}
		\sup_{s\in\R} \|T_s \nabla H(f)\|_{\infty} \lesssim \|f\|_2^{\gamma_1-1} + \|f\|_2^{\gamma_2-1},
	\end{align}
	where the implicit constant depends on $\|\psi\|_{{\frac{4}{5-\gamma_2}}+\delta}$.
\end{lemma}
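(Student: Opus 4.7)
The plan is to combine the pointwise $L^1 \to L^\infty$ dispersive estimate for the 1D free Schr\"odinger evolution with endpoint-type Strichartz estimates to control the Bochner-type integral defining $T_s \nabla H(f)$. Since $d_{\mathrm{av}} = 0$ gives $H = -N$, Remark \ref{rem:duality} yields
\[
T_s \nabla H(f) = -\int_{\R} T_{s-r} G_r \, \psi(r)\,\mathrm{d}r, \qquad G_r \coloneq V'(|T_r f|) \tfrac{T_r f}{|T_r f|}.
\]
Applying pointwise the 1D dispersive bound $\|T_t g\|_\infty \lesssim |t|^{-1/2} \|g\|_1$ inside the integral, and then using \ref{ass:growth} to estimate $|G_r| \lesssim |T_r f|^{\gamma_1-1} + |T_r f|^{\gamma_2-1}$, reduces matters to bounding, uniformly in $s\in \R$ and for each $\gamma \in \{\gamma_1, \gamma_2\}$, the quantity
\[
I_\gamma(s) \coloneq \int_{\R} \psi(r) |s-r|^{-1/2} \|T_r f\|_{\gamma-1}^{\gamma-1}\,\mathrm{d}r
\]
by a constant multiple of $\|f\|_2^{\gamma-1}$.

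To estimate $I_\gamma(s)$, I would apply H\"older's inequality in $r$ with conjugate exponents $p_1 = \tfrac{4}{7-\gamma}$ and $p_2 = \tfrac{4}{\gamma-3}$ (the edge case $\gamma = 3$ is handled by $p_2 = \infty$ via the unitarity $\|T_rf\|_2 = \|f\|_2$). The pair $\bigl(p_2(\gamma-1),\, \gamma-1\bigr) = \bigl(\tfrac{4(\gamma-1)}{\gamma-3},\, \gamma-1\bigr)$ is Strichartz admissible in one space dimension, so the standard Strichartz estimate yields
\[
\left(\int_{\R} \|T_r f\|_{\gamma-1}^{(\gamma-1) p_2}\,\mathrm{d}r\right)^{1/p_2} \lesssim \|f\|_2^{\gamma-1}.
\]
The remaining factor is
\[
J_\gamma(s) \coloneq \left(\int_{\R} \psi(r)^{4/(7-\gamma)} |s-r|^{-2/(7-\gamma)}\,\mathrm{d}r\right)^{(7-\gamma)/4},
\]
whose singular kernel $|s-r|^{-2/(7-\gamma)}$ is locally integrable since $\gamma < 5$ forces $\tfrac{2}{7-\gamma} < 1$.

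Exploiting the compact support of $\psi$, one further H\"older application on $\mathrm{supp}\,\psi$ with exponent slightly exceeding $\tfrac{7-\gamma}{5-\gamma}$ bounds $J_\gamma(s)$ uniformly in $s \in \R$ by $\|\psi\|_{L^{4/(5-\gamma)+\delta}}$ for arbitrary $\delta > 0$; the bound stays uniform for $s$ outside $\mathrm{supp}\,\psi$ because the kernel $|s-r|^{-2/(7-\gamma)}$ is then uniformly controlled. Since $\gamma_1 \leq \gamma_2$ and $\psi$ has compact support, the assumed condition $\psi \in L^{4/(5-\gamma_2)+\delta}$ implies the weaker integrability needed for $\gamma = \gamma_1$, and summing over $\gamma \in \{\gamma_1, \gamma_2\}$ yields \eqref{eq:kunzebound}.

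The main technical obstacle is the singularity $|s-r|^{-1/2}$ of the dispersive kernel at $s = r \in \mathrm{supp}\,\psi$, which forces one to pay for integrability of $\psi$ above the level $\tfrac{4}{5-\gamma_2}$; this is precisely what pins down the hypothesis on $\psi$. The lower bound $\gamma_1 \geq 3$ is likewise essential to ensure that the mixed-norm exponent $\bigl(\tfrac{4(\gamma-1)}{\gamma-3}, \gamma-1\bigr)$ stays within the Strichartz-admissible range (with the value $\gamma=3$ corresponding to the trivial unitarity endpoint).
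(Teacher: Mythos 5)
Your argument is correct and follows essentially the same route as the paper: the $|t|^{-1/2}$ dispersive bound reduces everything to $I_\gamma(s)$, which is then handled by H\"older with conjugate exponents $\tfrac{4}{7-\gamma}$ and $\tfrac{4}{\gamma-3}$, the Strichartz estimate for the admissible pair, and a final H\"older on $\mathrm{supp}\,\psi$ that pins down the $L^{4/(5-\gamma_2)+\delta}$ hypothesis --- with identical exponents throughout. The only cosmetic difference is that you apply Minkowski's integral inequality to the Bochner integral directly where the paper tests against $L^1$ functions by duality; these are equivalent.
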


\begin{proof}
	We have
	\begin{align*}
		&\|T_s \nabla H(f) \|_{\infty} = \sup_{\|g\|_{1} = 1} \left| \Re \langle T_s \nabla H(f), g \rangle\right| = \sup_{\|g\|_{1} = 1} \left| \Re \langle \nabla H(f), T_{-s} g \rangle\right| \\
		&= \sup_{\|g\|_{1} = 1} \left|\Re \int_{\R} \left\langle V'(|T_r f|) \frac{T_r f}{|T_r f|}, T_{r-s} g \right\rangle \,\psi(r)\,\mathrm{d}r \right| .
	\end{align*}
	Using the basic dispersive estimate for the free Schr\"odinger evolution, $\|T_s g\|_{\infty} \lesssim |s|^{-1/2} \|g\|_{1}$ for all $s\neq 0$, we obtain, together with assumption \ref{ass:growth},
	\begin{align}\label{eq:uniform}
		\|T_s \nabla H(f) \|_{\infty} &\lesssim \int_{\R} \frac{\psi(r)}{|r-s|^{1/2}} \int_{\R} |V'(|T_r f|)|\,\mathrm{d}x\,\mathrm{d}r\nonumber\\
		&\lesssim \int_{\R} \frac{\psi(r)}{|r-s|^{1/2}} \left( \|T_r f\|_{{\gamma_1-1}}^{\gamma_1-1} + \|T_r f\|_{{\gamma_2-1}}^{\gamma_2-1} \right) \,\mathrm{d}r.
	\end{align}
	An application of H\"older's inequality then yields
	\begin{align*}
		\int_{\R} \frac{\psi(r)}{|r-s|^{1/2}} \|T_r f\|_{\gamma-1}^{\gamma-1} \,\mathrm{d}r
		\leq \||\cdot -s|^{-1/2}\psi\|_{{\frac{p}{p-1}}} \, \left( \int_{\R} \|T_r f\|_{{\gamma-1}}^{p(\gamma-1)} \,\mathrm{d}r \right)^{1/p} .
	\end{align*}
	The pair $(\gamma-1, p(\gamma-1))$ is Strichartz admissible if $\gamma-1 \geq 2$ and $\frac{2}{p(\gamma-1)} = \frac{1}{2} - \frac{1}{\gamma-1}$, that is, $p=\frac{4}{\gamma-3}$. Note that $p \geq 1$ if $\gamma \leq 7$. In this case,
	\begin{align*}
		\int_{\R} \|T_r f\|_{\gamma-1}^{p(\gamma-1)} \,\mathrm{d}r \lesssim \|f\|_2^{p(\gamma-1)}
	\end{align*}
	by Strichartz' inequality, and thus,
	\begin{align*}
		\int_{\R} \frac{\psi(r)}{|r-s|^{1/2}} \|T_r f\|_{{\gamma-1}}^{\gamma-1} \,\mathrm{d}r \lesssim \||\cdot -s|^{-1/2}\psi\|_{{\frac{4}{7-\gamma}}} \|f\|_2^{\gamma-1}.
	\end{align*}
	Setting $\alpha= \frac{2}{7-\gamma}$, we see that we have to bound $\int |r-s|^{-\alpha} \psi^{2\alpha}(r)\, \mathrm{d}r$ uniformly in $s$. Let $\theta>1$ and apply H\"older's inequality once more to see
	\begin{align*}
	  \int |r-s|^{-\alpha} \psi^{2\alpha}(r)\, \mathrm{d}r
	    \le \left( \int_{\mathrm{supp}\,\psi} |r-s|^{-\alpha\theta} \, \mathrm{d} r \right)^{\frac{1}{\theta}} \left( \int \psi(r)^{\frac{2\alpha\theta}{\theta-1}} \right)^{\frac{\theta-1}{\theta}} .
	\end{align*}
	As long as $\alpha\theta<1$, we have
	\begin{align*}
	  \sup_{s\in\R} \int_{\mathrm{supp}\,\psi} |r-s|^{-\alpha\theta} \, \mathrm{d} r <\infty
	\end{align*}
	since $\mathrm{supp}\,\psi$ is compact. So we need $\alpha<1/\theta$, which is equivalent to
	\begin{align*}
	  \frac{2\alpha\theta}{\theta-1} > \frac{2\alpha}{1-\alpha} = \frac{4}{5-\gamma}. 
	\end{align*}
	Since $\psi$ is compactly supported and  $\psi\in L^{\frac{4}{5-\gamma_2}+\delta}$ for some $\delta>0$, we see that, setting $\alpha_j = \frac{4}{7-\gamma_j}$, there exist $\theta_j>1$ with $\alpha_j\theta_j<1$ and
	$\frac{2\alpha_j\theta_j}{\theta_j-1}= \frac{4}{5-\gamma_j}+\delta$. This shows that both terms on the right hand side of \eqref{eq:uniform} can be bounded uniformly in $s\in\R $.
\end{proof}

\begin{lemma}\label{lem:modified}
	Assume that $3\leq \gamma_1 \leq \gamma_2 <5$ and that $\psi\in L^{\frac{4}{5-\gamma_2}+\delta}$ for some $\delta>0$. Let $(u_n)_{n\in\N}\subset L^2(\R)$, $\|u_n\|_2^2=\lambda$ for all $n\in\N$, be a minimizing sequence for $E_{\lambda}^0$. If $E_{\lambda}^0<0$, then there exists another minimizing sequence $(v_n)_{n\in\N}\subset L^2\cap L^{\infty}(\R)$ with
	\begin{align*}
		\sup_{r\in\R} \|T_r v_n\|_{\infty} \leq C_{\lambda}.
	\end{align*}
\end{lemma}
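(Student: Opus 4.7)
My plan is to apply Ekeland's variational principle directly on the (complete metric) constraint submanifold $\mathcal{S}_\lambda^0$ and then exploit the dispersive estimate in Lemma \ref{lem:kunzebound} to convert the approximate Euler--Lagrange equation into the required uniform $L^\infty$ bound. Since $H$ is bounded below on $\mathcal{S}_\lambda^0$ (Proposition \ref{prop:wellposedness-zero}) and continuous there (Lemma \ref{lem:continuity}), Ekeland applied to the given minimizing sequence $(u_n)$ with $H(u_n) \leq E_\lambda^0 + \varepsilon_n$ produces $(v_n) \subset \mathcal{S}_\lambda^0$ with $\|v_n - u_n\|_2 \leq \sqrt{\varepsilon_n}$, $H(v_n) \to E_\lambda^0$, and the approximate stationarity $|D_h H(v_n)| \leq \sqrt{\varepsilon_n}\|h\|_2$ for all tangent vectors $h \in T_{v_n}\mathcal{S}_\lambda^0 = \{h \in L^2 : \Re\langle v_n, h\rangle = 0\}$. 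Using the Riesz representation of $D_h N(v_n)$ from Remark \ref{rem:duality}, this tangential approximate vanishing of $\nabla H(v_n) = -\nabla N(v_n)$ translates into an approximate Lagrange multiplier equation
\begin{align*}
    \omega_n v_n + \nabla N(v_n) = \rho_n, \qquad \|\rho_n\|_2 \to 0,
\end{align*}
with $\omega_n = -D_{v_n} N(v_n)/\lambda$.

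Next, I would bound $|\omega_n|$ away from zero using that $E_\lambda^0 < 0$ by hypothesis. Assumption \ref{ass:saturation}, applied pointwise inside the identity $D_{v_n}N(v_n) = \iint V'(|T_r v_n|)|T_r v_n|\,\mathrm{d}x\,\psi\,\mathrm{d}r$, gives $D_{v_n}N(v_n) \geq 2 N(v_n) = -2 H(v_n)$, hence
\begin{align*}
    \omega_n \leq \frac{2 H(v_n)}{\lambda} \longrightarrow \frac{2 E_\lambda^0}{\lambda} < 0,
\end{align*}
so $|\omega_n| \geq c > 0$ for all large $n$.

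Rewriting the approximate Euler--Lagrange equation as $v_n = -\omega_n^{-1}\nabla N(v_n) + \omega_n^{-1}\rho_n$, I would pass to the principal part $\tilde v_n \coloneq -\omega_n^{-1}\nabla N(v_n)$. Since $\nabla H = -\nabla N$ when $d_{\mathrm{av}} = 0$, Lemma \ref{lem:kunzebound} combined with the uniform lower bound on $|\omega_n|$ yields
\begin{align*}
    \sup_{s\in\R}\|T_s \tilde v_n\|_\infty
    \leq |\omega_n|^{-1}\sup_{s\in\R}\|T_s\nabla N(v_n)\|_\infty
    \lesssim \|v_n\|_2^{\gamma_1-1} + \|v_n\|_2^{\gamma_2-1}
    \leq C_\lambda .
\end{align*}
Because $v_n - \tilde v_n = \omega_n^{-1}\rho_n \to 0$ in $L^2$, we have $\|\tilde v_n\|_2 \to \sqrt{\lambda}$, so the renormalized sequence $w_n \coloneq \sqrt{\lambda}\,\tilde v_n/\|\tilde v_n\|_2 \in \mathcal{S}_\lambda^0$ inherits the uniform bound $\sup_s\|T_s w_n\|_\infty \leq C_\lambda'$, while $\|w_n - v_n\|_2 \to 0$. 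The local Lipschitz continuity of $N$ (Lemma \ref{lem:continuity}) then forces $H(w_n) \to E_\lambda^0$, and $(w_n)$ is the desired minimizing sequence.

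The main obstacle I expect is that Ekeland's principle controls $\rho_n$ only in $L^2$, while the conclusion requires pointwise control on $T_s v_n$ uniformly in both $s$ and $n$. The detour through the principal part $\tilde v_n$ is essential because Lemma \ref{lem:kunzebound} provides a uniform $L^\infty$-bound precisely for quantities of the form $\nabla N(\,\cdot\,)$, and the renormalization step is what allows us to discard the $L^2$-small but $L^\infty$-uncontrolled remainder $\omega_n^{-1}\rho_n$ without leaving the constraint manifold.
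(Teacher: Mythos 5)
Your proposal is correct and follows essentially the same route as the paper: Ekeland's principle on the sphere, the lower bound $-\langle\nabla H(w_n),w_n\rangle\ge -2H(w_n)\to -2E_\lambda^0>0$ from \ref{ass:saturation} to keep the Lagrange multiplier away from zero, and Lemma \ref{lem:kunzebound} applied to $\nabla N(w_n)=-\nabla H(w_n)$ to get the uniform $L^\infty$ bound. The only cosmetic difference is that the paper normalizes $-\nabla H(w_n)$ by its $L^2$ norm directly (showing $\sigma_n/\|\nabla H(w_n)\|_2\to 1$), whereas you divide by $\omega_n$ first and then renormalize onto the sphere.
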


\begin{proof}
\textbf{Step 1 (Construction of a modified minimizing sequence).}
	Since $H$ satisfies all the requirements of Ekeland's variational principle (see Appendix \ref{app:ekeland}), there exists another minimizing sequence $(w_n)_{n\in\N}\subset \mathcal{S}_{\lambda}^0$, such that $H(w_n) \leq H(u_n)$ for all $n\in\N$, $\|w_n - u_n\|_2 \to 0$ as $n\to\infty$, and
	\begin{align}\label{eq:ekeland-modified}
		\nabla H(w_n) - \left\langle \nabla H(w_n), \frac{w_n}{\|w_n\|_2} \right\rangle \frac{w_n}{\|w_n\|_2} \to 0 \quad \text{as} \quad n\to \infty
	\end{align}
	strongly in $L^2$, where $\nabla H(f) = - \int_{\R} T_r^{-1}\left[V'(|T_r f|) \frac{T_r f}{|T_r f|} \right] \,\psi\mathrm{d}r$, see Remark \ref{rem:duality}. Write
	\begin{align}\label{eq:Ekeland-remainder}
		g_n := \nabla H(w_n) + \sigma_n \frac{w_n}{\|w_n\|_2} = \nabla H(w_n) + \sigma_n \frac{w_n}{\sqrt{\lambda}}, \quad n\in\N,
	\end{align}
	with $\sigma_n \coloneq -\left\langle \nabla H(w_n), \frac{w_n}{\|w_n\|_2} \right\rangle$. Then $g_n \to 0$ strongly in $L^2$ for $n\to\infty$ by \eqref{eq:ekeland-modified}.
	
	By assumption \ref{ass:saturation},
	\begin{align*}
		-\langle \nabla H(w_n), w_n \rangle &= D_{w_n}N(w_n)= \iint_{\R^2} V'(|T_r w_n|)\,|T_r w_n|\,\mathrm{d}x\,\psi(r)\,\mathrm{d}r\\
		&\geq 2 N(w_n) = -2 H(w_n) \stackrel{n\to\infty}{\longrightarrow} -2 E_{\lambda}^0 >0 ,
	\end{align*}
	so, picking a subsequence if necessary, we can assume that $\sigma_n \geq -\frac{E_{\lambda}^0}{\sqrt{\lambda}} >0$ for all $n\in\N$. Therefore, $\sigma_n^{-1}$ is uniformly bounded and $\sigma_n^{-1}g_n  \to 0$ as $n\to\infty$.

	Now define the sequence
	\begin{align*}
		v_n :=  - \sqrt{\lambda} \, \frac{\nabla H(w_n)}{\|\nabla H(w_n)\|_2}, \quad \|v_n\|_2^2 = \lambda, \quad n\in\N.
	\end{align*}
	We will show that $(v_n)_{n\in\N} \subset \mathcal{S}_{\lambda}^0$ is again a minimizing sequence for $H$. Indeed,
	\begin{align*}
		\|v_n - w_n\|_2
		&= \frac{\sqrt{\lambda}}{\sigma_n} \left\| \left( 1- \frac{\sigma_n}{\|\nabla H(w_n)\|_2}  \right) \nabla H(w_n) - g_n \right\|_2 \\
		&\leq \sqrt{\lambda} \left| 1- \frac{\sigma_n}{\|\nabla H(w_n)\|_2} \right| \frac{\|\nabla H(w_n)\|_2}{\sigma_n} + \sqrt{\lambda} \frac{\|g_n\|_2}{\sigma_n} .
	\end{align*}
	Since $\sigma_n^{-1} g_n \to 0$ in $L^2$, it remains to show that
	\begin{align*}
		\frac{\sigma_n}{\|\nabla H(w_n)\|_2} \to 1 
	\end{align*}
	as $n\to\infty$. But from \eqref{eq:Ekeland-remainder} we have
	\begin{align*}
		\|\nabla H(w_n) \|_2^2 = \|g_n\|_2^2 + \sigma_n^2 - 2 \sigma_n \Re \left\langle \frac{w_n}{\|w_n\|_2}, g_n \right\rangle,
	\end{align*}
	so
	\begin{align*}
	\frac{\|\nabla H(w_n)\|_2^2}{\sigma_n^2} = 1 &+ \frac{\|g_n\|_2^2}{\sigma_n^2} - 2 \, \Re \left\langle \frac{w_n}{\|w_n\|_2}, \frac{g_n}{\sigma_n} \right\rangle  \to 1
	\end{align*}
	as $n\to\infty$ since $\sigma_n^{-1} g_n \to 0$ in $L^2$.

\medskip
\noindent
\textbf{Step 2 ($L^{\infty}$ boundedness of the modified minimizing sequence).}
	By Lemma \ref{lem:kunzebound} and the bound
	\begin{align*}
		\|w_n\|_2\|\nabla H(w_n)\|_2 \geq \left|\Re\langle\nabla H(w_n), w_n \rangle \right| \geq -2 E_{\lambda}^0 > 0,
	\end{align*}
	we obtain
	\begin{align*}
		\|T_s v_n\|_{\infty} &= \frac{\sqrt{\lambda}}{\|\nabla H(w_n)\|_2} \|T_s \nabla H(w_n)\|_{\infty} \\
		&\lesssim \frac{\lambda}{|E^0_\lambda|}\left( \|w_n\|_2^{\gamma_1-1} + \|w_n\|_2^{\gamma_2-1}\right) = \left(\lambda^{\frac{\gamma_1+1}{2}} + \lambda^{\frac{\gamma_2+1}{2}}\right)/|E^0_\lambda|.
	\end{align*}
\end{proof}

We can now turn to the proof of existence of dispersion managed solitons in the case of zero average/ dispersion.

\begin{proof}[Proof of Theorem \ref{thm:zero}]
We start with $0<\lambda<\lambda_{\mathrm{cr}}^0$. Since by Proposition \ref{prop:wellposedness-zero} $E_{\lambda}^0 \leq 0$, the definition of the threshold (Definition \ref{def:threshold}) implies that $E_{\lambda}^0 = 0$, proving part (i) of the theorem.

Assume now that $\lambda>\lambda_{\mathrm{cr}}^0$. Then, by definition, $E_{\lambda}^0 < 0$.

Let $(v_n)_{n\in\N}\subset \mathcal{S}_{\lambda}^0\cap L^{\infty}(\R)$ be the minimizing sequence constructed in Lemma \ref{lem:modified}, such that $\|T_r v_n\|_{\infty} \leq C_{\lambda}$ for some uniform constant $C_{\lambda}$.

By Proposition \ref{prop:strictsubadd}, the ground state energy $E_{\lambda}^0$ is strictly sub-additive along $(v_n)_{n\in\N}$.
Once we have strict sub-additivity, the bound (4.7) from \cite[Proposition 4.4]{CHL15} again holds. Then one can use this, similarly to the proof of \cite[Proposition 4.6]{CHL15}, to show that the sequence $(v_n)_{n\in\N}$ is tight (that is, $|v_n(x)|^2\,\mathrm{d}x$ and $|\widehat{v}_n(\eta)|^2\,\mathrm{d}\eta$ are tight in the sense of measures) modulo shifts and boosts, i.e. there exist shifts $y_n$ and boosts $\xi_n$ such that
\begin{align*}
	\lim_{R\to\infty} \sup_{n\in\N} \int_{|x-y_n|>R} |v_n(x)|^2\,\mathrm{d}x &= 0, \\
    \lim_{L\to\infty} \sup_{n\in\N} \int_{|\eta-\xi_n|>L} |\widehat{v}_n(\eta)|^2\,\mathrm{d}\eta &= 0	.
\end{align*}
Let $f_n(x) \coloneq \mathrm{e}^{\I \xi_n x} v_n(x- y_n)$, $n\in\N$, be the shifted and boosted minimizing sequence. Then by the invariance of $H$ under shifts and boosts, $(f_n)_{n\in\N}$ is again a minimizing sequence with $\|f_n\|_2^2 = \|v_n\|_2^2 = \lambda$. Since $|f_n(x)| = |v_n(x-y_n)|$ and $|\widehat{f_n}(\eta)| = |\widehat{v}_n(\eta-\xi_n)|$, the sequence $(f_n)_{n\in\N}$ is also tight.

Since the sequence $(f_n)_{n\in\N}$ is bounded in $L^2(\R)$, there exists a weakly convergent subsequence (again denoted $(f_n)_{n\in\N}$) by the weak compactness of the unit ball. Since this subsequence is also tight, it converges even strongly in $L^2(\R)$ to some $f \in L^2$. By continuity of the $L^2$ norm and the nonlinearity $N$ under strong $L^2$-convergence, we have
\begin{align*}
	E_{\lambda}^0 \leq H(f) = -N(f) = \lim_{n\to\infty} -N(f_n) = E_{\lambda}^0,
\end{align*}
since $(f_n)_{n\in\N}$ is minimizing. Thus $f$ is a minimizer of the variational problem \eqref{eq:varprob} for $d_{\mathrm{av}} = 0$.

The Euler-Lagrange equation of the constrained minimization problem is the dispersion management equation \eqref{eq:DM} and it is a standard exercise to show that the minimizer $h$ found above is a weak solution of the Euler-Lagrange equation,
\begin{align}\label{eq:EL0}
	\omega \, \langle f, g \rangle = - D_g N(f) = -\int_{\R} \left\langle V'(|T_r f|) \frac{T_r f}{|T_r f|}, T_r g \right\rangle \,\psi\,\mathrm{d}r
\end{align}
for all $g\in L^2(\R)$, see also \cite{CHL15} for more details.
In particular, Lemma \ref{lem:kunzebound} implies that $T_s f \in L^{\infty}(\R)$ for almost all $s\in\mathrm{supp}\psi$. Inserting $g=f$ as test function in \eqref{eq:EL0} yields
\begin{align*}
	\omega \|f\|_2^2 = \omega \lambda &= - \iint_{\R^2} V'(|T_r f(x)|) \, |T_r f(x)|\,\mathrm{d}x\,\psi\,\mathrm{d}r \\
	&\leq -\iint_{\R^2} \kappa(|T_r f(x)|) \, V(|T_r f(x)|) \,\mathrm{d}x\,\psi\,\mathrm{d}r \\
	&\leq -\kappa^*(C_{\lambda}) N(f) < - 2 N(f) = 2 E_{\lambda}^0,
\end{align*}
by assumption \ref{ass:saturation} and the uniform bound on the minimizer, so $\omega < \frac{2 E_{\lambda}^0}{\lambda}$.
\end{proof}

\subsection{Existence of minimizers for positive average dispersion}

The situation is much easier in the positive average dispersion case, since the uniform $L^{\infty}$ bound is directly provided by the simple bound
\begin{align}\label{eq:1dsobolev}
	\|h\|_{\infty}^2 \leq \|h\|_2 \|h'\|_2 \leq \|h\|_{H^1}^2
\end{align}
for any $h\in H^1(\R)$, i.e., the Sobolev embedding $H^1(\R) \subset L^{\infty}(\R)$. We will assume throughout this section that assumptions \ref{ass:growth}, \ref{ass:saturation}, and \ref{ass:strictpos} hold with $2<\gamma_1 \leq \gamma_2<10$. We further assume that $\psi$ is compactly supported and $\psi\in L^{a_{\delta}}$ for some $\delta>0$, where $a_{\delta} \coloneq \max\{1, \frac{4}{10-\gamma_2}+\delta\}$.

\begin{proposition}\label{prop:wellposedness-pos}
The energy functional $H$ is bounded below on $\mathcal{S}_{\lambda}^{d_{\mathrm{av}}}$ for any $\lambda>0$ and coercive in $\|f'\|$, that is,
\begin{align*}
	\lim_{\substack{\|f'\|\to\infty \\ \|f\|^2 = \lambda}} H(f) = +\infty.
\end{align*}
Moreover, $-\infty < E_{\lambda}^{d_{\mathrm{av}}} \leq 0$.
\end{proposition}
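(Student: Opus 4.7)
The plan is to show that $N(f)$ grows strictly sub-quadratically in $\|f'\|_2$ on the mass constraint sphere, which immediately yields both lower boundedness and coercivity of $H = \frac{d_\mathrm{av}}{2} \|f'\|_2^2 - N$. For the upper bound $E_\lambda^{d_\mathrm{av}} \leq 0$, a family of centered Gaussians with large variance provides a test family along which $H$ vanishes in the limit.

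Integrating assumption \ref{ass:growth} yields the pointwise bound $V(a) \lesssim a^{\gamma_1} + a^{\gamma_2}$, so it suffices to control $\iint |T_rf|^\gamma\,\mathrm{d}x\,\psi\mathrm{d}r$ for $\gamma \in \{\gamma_1,\gamma_2\}$. The key trick is to split $|T_rf|^\gamma = |T_rf|^{\gamma-p}\,|T_rf|^p$ for a suitable $p \in [2,6]$, pull the first factor out through the supremum via the one-dimensional Sobolev-Gagliardo-Nirenberg inequality
\begin{align*}
	\|T_r g\|_\infty \leq \bigl(\|T_r g\|_2\,\|(T_r g)'\|_2\bigr)^{1/2} = \bigl(\|g\|_2\,\|g'\|_2\bigr)^{1/2}
\end{align*}
(using that $T_r$ is unitary on $L^2$ and commutes with differentiation), and then apply Lemma \ref{lem:st-boundedness} to the remaining $L^p$-factor. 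This produces
\begin{align*}
	\iint |T_r f|^\gamma\,\mathrm{d}x\,\psi\mathrm{d}r \lesssim \|\psi\|_{4/(6-p)}\, \lambda^{(p+\gamma)/4}\,\|f'\|_2^{(\gamma-p)/2}
\end{align*}
for $f \in \mathcal{S}_\lambda^{d_\mathrm{av}}$. The exponent $(\gamma-p)/2$ is strictly less than $2$ precisely when $p > \gamma - 4$; the hypothesis $\gamma_2 < 10$ together with $\psi \in L^{a_\delta}$ for $a_\delta = \max\{1,\tfrac{4}{10-\gamma_2}+\delta\}$ ensures that a single $p \in (\gamma_2 - 4, 6)$ with $4/(6-p) \leq a_\delta$ can be chosen, and this same $p$ handles $\gamma_1$ as well since $p > \gamma_2 - 4 \geq \gamma_1 - 4$.

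Combining these two estimates with Young's inequality $\|f'\|_2^s \leq \varepsilon\|f'\|_2^2 + C_{\varepsilon}$, valid for $s < 2$, and choosing $\varepsilon$ small enough gives
\begin{align*}
	H(f) \geq \tfrac{d_\mathrm{av}}{4}\,\|f'\|_2^2 - C_\lambda
\end{align*}
uniformly on $\mathcal{S}_\lambda^{d_\mathrm{av}}$, proving simultaneously $E_\lambda^{d_\mathrm{av}} > -\infty$ and the coercivity $H(f) \to +\infty$ as $\|f'\|_2 \to \infty$.

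For the remaining bound $E_\lambda^{d_\mathrm{av}} \leq 0$, take the centered Gaussian family $g_\sigma$ from \eqref{eq:gaussian} with $\|g_\sigma\|_2^2 = \lambda$ and send $\sigma\to\infty$. Then $\|g_\sigma'\|_2^2 = \lambda/\sigma \to 0$ and $|T_r g_\sigma(x)| \leq A_0 = (2\lambda^2/\pi\sigma)^{1/4} \to 0$ uniformly in $r$ and $x$, exactly as in the proof of Lemma \ref{lem:threshold}. Applying the bound above with $p=2$ (and using $\gamma_1 > 2$ to extract a positive power of $\sigma^{-1}$) shows $N(g_\sigma) \to 0$, so $H(g_\sigma) \to 0$ and $E_\lambda^{d_\mathrm{av}} \leq 0$. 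The main obstacle is the balance in the splitting parameter $p$: pushing $(\gamma - p)/2$ below $2$ forces $p$ close to $6$, but the Strichartz constant $\|\psi\|_{4/(6-p)}$ blows up in the same limit, and the hypotheses $\gamma_2 < 10$ together with the $\delta$-room in $a_\delta$ are designed precisely to make these two constraints compatible.
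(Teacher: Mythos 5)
Your argument is essentially the paper's own: the pointwise bound $V(a)\lesssim a^{\gamma_1}+a^{\gamma_2}$, extraction of the super\mbox{-}Strichartz part of $|T_rf|^{\gamma}$ through $\sup_r\|T_rf\|_\infty^{\gamma-p}\le(\|f\|_2\|f'\|_2)^{(\gamma-p)/2}$, Lemma~\ref{lem:st-boundedness} on the remaining $L^p$ factor, and the observation that the resulting power of $\|f'\|_2$ is strictly below $2$ (the paper writes this as $\kappa_j<4$ for the exponent $\kappa_j=\gamma_j-p$), followed by the wide\mbox{-}Gaussian family for $E_\lambda^{d_{\mathrm{av}}}\le 0$. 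The one slip is your claim that a \emph{single} $p$ works for both exponents: the constraints force $p>\gamma_2-4$, and when $\gamma_2-4>\gamma_1$ (e.g.\ $\gamma_1=3$, $\gamma_2=9$) the factorization $|T_rf|^{\gamma_1}=|T_rf|^{\gamma_1-p}|T_rf|^{p}$ has a negative first exponent and cannot be bounded by $\|T_rf\|_\infty^{\gamma_1-p}$; this is harmless because in that case $\gamma_1<6$, so the $\gamma_1$-term can be handled with its own splitting exponent (say $p_1=2$, needing only $\psi\in L^1$ and giving the subquadratic power $(\gamma_1-2)/2<2$), which is exactly why the paper allows separate $\kappa_1,\kappa_2$.
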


\begin{proof}
	For $2<\gamma_1\leq \gamma_2 \leq 6$ we can, as in Proposition \ref{prop:wellposedness-zero}, estimate the nonlinearity by
	\begin{align*}
		N(f) \lesssim \|f\|_2^{\gamma_1} + \|f\|_2^{\gamma_2}
	\end{align*}
	In case $\gamma_j > 6$ for some $j=1,2$, we can extract the excess part in the $L^{\infty}$ norm, estimating
	\begin{align*}
		\int_{\R} \|T_r f\|_{\gamma}^{\gamma}\,\psi\mathrm{d}r \leq \sup_{r\in\R} \|T_r f\|_{\infty}^{\kappa} \int_{\R} \|T_r f\|_{\gamma-\kappa}^{\gamma-\kappa} \,\psi\mathrm{d}r
	\end{align*}
	for some $2\leq \gamma-\kappa \leq 6$. Using \eqref{eq:1dsobolev},
	\begin{align*}
		\sup_{r\in\R} \|T_r f\|_{\infty} \leq \left(\|f\|_2 \|f'\|_2 \right)^{1/2},
	\end{align*}
	where we used the unitarity of $T_r$ on $L^2$ and the fact that $T_r$ commutes with $\partial_x$, this yields, together with Lemma \ref{lem:st-boundedness},
	\begin{align*}
		N(f) \lesssim \|f'\|_2^{\frac{\kappa_1}{2}} \|f\|_2^{\gamma_1 - \frac{\kappa_1}{2}} + \|f'\|_2^{\frac{\kappa_2}{2}} \|f\|_2^{\gamma_2 - \frac{\kappa_2}{2}},
	\end{align*}
	for suitable $(\gamma_j - 6)_+ \leq \kappa_j \leq \gamma_j - 2$, $j=1,2$, and an implicit constant that can be chosen in such a way that it only depends on the $L^{a_{\delta}}$ norm of $\psi$. It is easy to see that for given $a_{\delta}\geq 1$, one can always choose $\kappa_j <4$.
	Therefore,
	\begin{align}\label{eq:coercivity}
		H(f) \geq \frac{d_{\mathrm{av}}}{2} \|f'\|_2^2 - C \left( \|f'\|_2^{\frac{\kappa_1}{2}} \|f\|_2^{\gamma_1 - \frac{\kappa_1}{2}} + \|f'\|_2^{\frac{\kappa_2}{2}} \|f\|_2^{\gamma_2 - \frac{\kappa_2}{2}} \right)
	\end{align}
	for some constant $C=C(\|\psi\|_{a_{\delta}})$. In particular, if $\|f\|_2^2 = \lambda$, then $H(f) \to \infty$ as $\|f'\|_2 \to \infty$. Moreover,
	\begin{align*}
		E_{\lambda}^{d_{\mathrm{av}}} \geq \inf_{t>0} \left( \frac{d_{\mathrm{av}}}{2} t^2 - C \left( t^{\frac{\kappa_1}{2}} \lambda^{\frac{1}{2}(\gamma_1 - \frac{\kappa_1}{2})} + t^{\frac{\kappa_2}{2}} \lambda^{\frac{1}{2}(\gamma_2 - \frac{\kappa_2}{2})} \right) \right) > -\infty.
	\end{align*}
	
To prove that $E_{\lambda}^{d_{\mathrm{av}}} \leq 0$ we again calculate the energy of suitable centered Gaussians \eqref{eq:gaussian}. Since by \eqref{eq:Vbound},
\begin{align*}
	N(g_{\sigma_0}) \lesssim \|\psi\|_{1} \sup_{r\in\mathrm{supp}\,\psi} \left( \|T_rg_{\sigma_0}\|_{\gamma_1}^{\gamma_1} + \|T_rg_{\sigma_0}\|_{\gamma_2}^{\gamma_2} \right),
\end{align*}
where $2<\gamma_1\leq \gamma_2$, it is not hard to see that
\begin{align*}
	\lim_{\sigma_0\to\infty} H(g_{\sigma_0}) = 0,
\end{align*}
which implies $E_{\lambda}^0 \leq 0$.
\end{proof}

\begin{proof}[Proof of Theorem \ref{thm:positive}]
Fix $0< \lambda <\lambda_{\mathrm{cr}}^{d_{\mathrm{av}}}$. By definition of the threshold and $E_{\lambda}^{d_{\mathrm{av}}}\leq 0$, we must then have $E_{\lambda}^{d_{\mathrm{av}}} = 0$. Assume now that there exists a minimizer $f \in \mathcal{S}_{\lambda}^{d_{\mathrm{av}}}$ with $H(f) = E_{\lambda}^{d_{\mathrm{av}}} = 0$, then
\begin{align}\label{eq:nonexistence}
\begin{split}
	0 &= H(f)
	= \frac{d_{\mathrm{av}}}{2} \|f'\|_2^2 - N(f) = \|f'\|_2^2 \left( \frac{d_{\mathrm{av}}}{2} - \frac{N(f)}{\|f'\|_2^2} \right) \\
	&\geq \|f'\|_2^2 \left( \frac{d_{\mathrm{av}}}{2} - \sup_{\substack{\|g\|_2=1 \\ \|g'\|_2\leq \lambda^{-1/2}\|f'\|_2}} \frac{N(\sqrt{\lambda} g)}{\lambda \|g'\|_2^2} \right) \\
	&= \|f'\|_2^2 \left( \frac{d_{\mathrm{av}}}{2} - R_{\lambda^{-1/2}\|f'\|_2}(\lambda) \right).
\end{split}
\end{align}
	Since $\lambda<\lambda_{\mathrm{cr}}^{d_{\mathrm{av}}}$, Corollary \ref{cor:thresholds} implies that $R_C(\lambda) < \frac{d_{\mathrm{av}}}{2}$ for any $C>0$, in particular for $C=\lambda^{-1/2} \|f'\|_2$, so
	\begin{align*}
		\frac{d_{\mathrm{av}}}{2} - R_{\lambda^{-1/2}\|f'\|_2}(\lambda) > 0,
	\end{align*}
	which by \eqref{eq:nonexistence} implies that $\|f'\|_2 = 0$. But as the kernel of $\partial_x$ is trivial on $H^1(\R)$, we must have $f \equiv 0$, in contradiction to $\|f\|_2^2 = \lambda$, which shows that there cannot exist a minimizer if we are below the threshold $\lambda_{\mathrm{cr}}^{d_{\mathrm{av}}}$.
	
	Assume now $\lambda>\lambda_{\mathrm{cr}}^{d_{\mathrm{av}}}$ and let $(v_n)_{n\in\N} \subset \mathcal{S}_{\lambda}^{d_{\mathrm{av}}}$ be a minimizing sequence for $E_{\lambda}^{d_{\mathrm{av}}}$. Since $H$ is coercive on $\mathcal{S}_{\lambda}^{d_{\mathrm{av}}}$, the sequence $(v_n)$ is bounded. Indeed for $\|v_n\|_2^2 = \lambda$, $H(v_n) \to E_{\lambda}^{d_{\mathrm{av}}}>-\infty$, the bound \eqref{eq:coercivity} implies that $\|v_n'\|_2$ stays bounded, thus also $\|v_n\|_{H^1}$ is bounded \emph{uniformly} in $n\in\N$.
	
	Together with \eqref{eq:1dsobolev} and the unitarity of $T_r$ on $H^1$, we have
	\begin{align*}
		\|T_r v_n\|_{\infty} \leq \|T_r v_n\|_{H^1} = \|v_n\|_{H^1} \leq C_{\lambda}
	\end{align*}
	for any $r\in\mathrm{supp}\,\psi$, and some constant $C_{\lambda}>0$, and Proposition \ref{prop:strictsubadd} implies that the ground state energy $E_{\lambda}^{d_{\mathrm{av}}}$ is strictly sub-additive.
	Hence arguing as in the proofs of \cite[Propositions 4.3 and 4.5]{CHL15} the minimizing sequence is tight modulo shifts and tight in Fourier space, that is there exist shifts $(y_n)_{n\in\N}$ such that for the sequence $w_n:= v_n(\cdot - y_n)$, $n\in\N$, we have
	\begin{align*}
		\lim_{R\to\infty} \sup_{n\in\N} \int_{|x|>R} |w_n(x)|^2\,\mathrm{d}x = 0,
	\end{align*}
	and there exists a constant $K<\infty$ such that for any $L>0$
	\begin{align*}
		\sup_{n\in\N} \int_{|\eta|>L} |\widehat{w_n}(\eta)|^2\,\mathrm{d}\eta = \sup_{n\in\N} \int_{|\eta|>L} |\widehat{v_n}(\eta)|^2\,\mathrm{d}\eta \leq \frac{K}{L^2}.
	\end{align*}
	
	Since $H(w_n) = H(v_n)$ for all $n\in\N$ by translation invariance, $(w_n)_{n\in\N}$ is also a minimizing sequence with $\|w_n\|_2^2 = \|v_n\|_2^2 = \lambda$, which is bounded in $H^1$, $\|w_n\|_{H^1} = \|v_n\|_{H^1} \leq C_{\lambda}$. So the weak compactness of the unit ball implies that there exists a subsequence $w_{n_k}\rightharpoonup v\in H^1$ weakly in $H^1$ and in $L^2$. By tightness, we even have strong convergence in $L^2$. It follows that
	\begin{align*}
		\|v\|_2^2 = \lim_{k\to\infty} \|w_{n_k}\|_2^2 = \lambda >0
	\end{align*}
	and together with the weak sequential lower semi-continuity of the $H^1$ norm this also implies
	\begin{align*}
		\|v'\|_2^2 \leq \liminf_{k\to\infty} \|w_{n_k}'\|_2^2.
	\end{align*}
	Finally, since $\{w_{n_k}\}_{k\in\N}$ is bounded in $H^1$ and converges in $L^2$, the continuity of the nonlinearity $N$ with respect to strong $L^2$-convergence (Lemma \ref{lem:continuity}) yields
	\begin{align*}
		\lim_{k\to\infty} N(w_{n_k}) = N(v).
	\end{align*}
	Altogether, we thus have shown that $H$ is weakly lower semi-continuous along $\{w_{n_k}\}$, in particular
	\begin{align*}
		E_{\lambda}^{d_{\mathrm{av}}} \leq H(v) \leq \liminf_{k\to\infty} H(w_{n_k}) = E_{\lambda}^{d_{\mathrm{av}}},
	\end{align*}
	since $\{w_{n_k}\}$ is minimizing. It follows that $f$ is a minimizer of the variational problem \eqref{eq:varprob}.
The rest of the proof is analogous to the zero average dispersion case $d_{\mathrm{av}} = 0$.
\end{proof}

\appendix

\section{Ekeland's variational principle}\label{app:ekeland}

In this section we briefly derive the following corollary of Ekeland's variational principle \cite[see also the Appendix in \cite{Cos07}]{Eke74} needed in the construction of our modified minimizing sequence. Note that we do not require the functional to be $\mathcal{C}^1$, but only that all its directional derivatives exists and depend linearly and continuously on the direction.

\begin{proposition}\label{prop:ekeland}
	Let $\mathcal{H}$ be a real Hilbert space and $\varphi: \mathcal{H} \to \R$ a continuous functional with the property that all directional derivatives exist and the functional $h \mapsto D_h\varphi(f)$ is linear and continuous for all $f\in \mathcal{H}$.
		
	Assume that $\varphi$ is bounded from below on $\mathcal{S}_{\lambda} = \{ u\in\mathcal{H}: \|u\|^2 = \lambda\}$, and let $(f_n)_{n\in\N} \subset \mathcal{S}_{\lambda}$ be a minimizing sequence for $\varphi|_{\mathcal{S}_{\lambda}}$. Then there exists another minimizing sequence $(g_n)_{n\in\N} \subset \mathcal{S}_{\lambda}$ such that
	\begin{align*}
		\varphi(g_n) \leq \varphi(f_n), \quad \|g_n - f_n\| \to 0
	\end{align*}
	and
	\begin{align*}
		|(D_{h_n} \varphi|_{\mathcal{S}_{\lambda}})(g_n)| \to 0 \quad \text{as} \quad n\to\infty
	\end{align*}
	for any $h_n\in T_{g_n}\mathcal{S}_{\lambda}$ with $\sup_n\|h_n\|<\infty$.
\end{proposition}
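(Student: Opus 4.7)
The plan is to apply the classical Ekeland variational principle to $\varphi$ restricted to the sphere $\mathcal{S}_{\lambda}$, viewed as a complete metric space (it is a closed subset of $\mathcal{H}$) with the induced distance $d(u,v) = \|u-v\|$. Since $\varphi|_{\mathcal{S}_{\lambda}}$ is continuous and bounded below, Ekeland's principle applies with $\varepsilon_n \coloneq \sqrt{\varphi(f_n) - E_{\lambda}}$, where $E_{\lambda} \coloneq \inf_{\mathcal{S}_{\lambda}} \varphi$. This produces $g_n \in \mathcal{S}_{\lambda}$ with $\varphi(g_n) \le \varphi(f_n)$, $\|g_n - f_n\| \le \varepsilon_n$, and
\begin{align*}
\varphi(g) + \varepsilon_n \|g - g_n\| \ge \varphi(g_n) \quad \text{for all } g\in\mathcal{S}_{\lambda}.
\end{align*}
Since $\varphi(f_n) \to E_{\lambda}$, we get $\varepsilon_n \to 0$ and hence $\|g_n - f_n\|\to 0$, so $(g_n)$ is again a minimizing sequence.

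The next step is to transform the near-minimization inequality into a pointwise bound on the directional derivative. Given $h \in T_{g_n}\mathcal{S}_{\lambda}$, i.e. $\Re\langle g_n, h\rangle = 0$, I would use the radial projection curve
\begin{align*}
\gamma_n(t) \coloneq \sqrt{\lambda}\,\frac{g_n + t h}{\|g_n + t h\|}, \qquad t \in \R,
\end{align*}
which lies on $\mathcal{S}_{\lambda}$. The tangency condition yields $\|g_n + th\|^2 = \lambda + t^2\|h\|^2$, from which one reads off the expansions $\gamma_n(t) = g_n + th + r_n(t)$ with $\|r_n(t)\| \lesssim \lambda^{-1/2} t^2 \|h\|^2$, and $\|\gamma_n(t) - g_n\| = |t|\|h\| + O(t^2)$, with implicit constants depending only on $\lambda$ and $\sup_n\|h_n\|$ in the later application.

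Applying the Ekeland inequality with $g = \gamma_n(t)$ and combining it with the Gateaux expansion
\begin{align*}
\varphi(\gamma_n(t)) - \varphi(g_n) = t\, D_h\varphi(g_n) + o(t) \quad \text{as } t \to 0,
\end{align*}
then dividing by $t$ and sending $t \to 0^{\pm}$ (using linearity $D_{-h}\varphi(g_n) = -D_h\varphi(g_n)$), gives $|D_h\varphi(g_n)| \le \varepsilon_n \|h\|$ for every $h \in T_{g_n}\mathcal{S}_{\lambda}$. Specialising to a bounded sequence $h_n \in T_{g_n}\mathcal{S}_{\lambda}$ with $M \coloneq \sup_n\|h_n\| < \infty$ then yields $|(D_{h_n}\varphi|_{\mathcal{S}_\lambda})(g_n)| \le M \varepsilon_n \to 0$, as claimed.

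The main technical obstacle will be justifying the first-order expansion of $\varphi\circ\gamma_n$: since $\varphi$ is not assumed to be $\mathcal{C}^1$, the chain rule is not immediate. The standard expansion $\varphi(g_n + th) - \varphi(g_n) = t D_h\varphi(g_n) + o(t)$ follows directly from Gateaux differentiability in direction $h$, and the remaining term $\varphi(\gamma_n(t)) - \varphi(g_n + th)$ involves two points differing by $r_n(t) = O(t^2)$. To handle this, I would integrate the directional derivative along the segment joining these points: writing this difference as $\int_0^1 D_{r_n(t)}\varphi(g_n + th + s\, r_n(t))\,\mathrm{d}s$ and invoking the hypothesised continuity of $h \mapsto D_h\varphi(f)$ with locally bounded operator norm (a mild consistent strengthening that holds in the paper's applications, where $\varphi = H$ has the explicit derivative from Lemma \ref{lem:differentiability} bounded in terms of $L^2$ norms via Lemma \ref{lem:st-boundedness}) bounds this remainder by $O(\|r_n(t)\|) = O(t^2)$, which is absorbed in the $o(t)$ term.
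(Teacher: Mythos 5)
Your proposal is correct and follows essentially the same route as the paper: Ekeland's principle on the complete metric space $\mathcal{S}_{\lambda}$, followed by one-sided difference quotients of $\varphi$ along a curve on the sphere through $g_n$ with tangent $h_n$ (the paper takes an arbitrary $\mathcal{C}^1$ curve where you use the explicit radial projection, and it avoids the degenerate case $\varphi(f_n)=\inf_{\mathcal{S}_\lambda}\varphi$ by setting $\epsilon_n=\max\{1/n,\varphi(f_n)-c\}$ before applying Ekeland). The only substantive difference is how the $O(t^2)$ discrepancy between $\gamma_n(t)$ and the chord $g_n+th_n$ is absorbed: you propose an integral representation requiring locally bounded derivative functionals, whereas the paper writes $\varphi(\gamma(t))=\varphi(g_n)+tD_{\gamma'(0)+t^{-1}o(t)}\varphi(g_n)+o(t)$ and invokes continuity of $h\mapsto D_h\varphi(g_n)$ in the ($t$-dependent) direction --- an implicit uniformity assumption of the same strength as yours, so your extra hypothesis does not put you at a disadvantage relative to the paper's own argument and is satisfied in the intended application.
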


\begin{remark}
\begin{enumerate}[label=(\roman*)]
	\item As will be clear from the proof, linearity of the map $h\mapsto D_h\varphi(f)$ is not needed, the only important property is that the one-sided derivatives from left and right coincide, respectively, that $D_{-h}\varphi(f) = -D_h \varphi(f)$ for all $f\in\mathcal{H}$. Linearity allows us to represent, by reflexivity, the directional derivative at a given point $f$ in $\mathcal{S}$ by a vector $\nabla\varphi(f) \in \mathcal{H}$.
	\item Let $u\in\mathcal{S}_{\lambda}$. Since by assumption, the map
		\begin{align*}
			h\mapsto D_h \varphi(u)
		\end{align*}
		is linear and continuous, by the Riesz representation theorem there exists a uniquely determined vector $\nabla\varphi(u)$ such that
		\begin{align*}
			\langle \nabla \varphi(u), h \rangle = D_h \varphi(u).
		\end{align*}
		
		Since $\mathcal{S}_{\lambda}$ is a sphere in  $\mathcal{H}$, we have $\mathcal{H} = T_{u}\mathcal{S}_{\lambda} \oplus \R u$ for all $u\in\mathcal{S}_{\lambda}$. Therefore, the projection of $\nabla\varphi(u)$ onto $T_{u}\mathcal{S}_{\lambda}$ is given by
		\begin{align*}
			\nabla\varphi(u) - \left\langle \nabla\varphi(u), \frac{u}{\|u\|} \right\rangle \frac{u}{\|u\|}.
		\end{align*}
		By Proposition \ref{prop:ekeland}, we thus have
		\begin{align*}
			\left|\left\langle \nabla\varphi(g_n) - \langle \nabla\varphi(g_n), \tfrac{g_n}{\|g_n\|} \rangle \tfrac{g_n}{\|g_n\|}, h_n\right\rangle \right| = \left| (D_{h_n}\varphi|_{\mathcal{S}_{\lambda}})(g_n) \right|\to 0
		\end{align*}
		as $n\to\infty$ for all $h_n\in T_{g_n} \mathcal{S}_{\lambda}$ with $\|h_n\|\le 1$ (and therefore also for all $\widetilde{h}_n \in T_{g_n}\mathcal{S}_{\lambda} \oplus \R g_n =  \mathcal{H}$ with $\|\widetilde{h}_n\|\le 1$), so
		\begin{align*}
		\nabla\varphi(g_n) - \langle \nabla\varphi(g_n), \tfrac{g_n}{\|g_n\|} \rangle \tfrac{g_n}{\|g_n\|} \to 0, \quad n\to\infty
		\end{align*}
		strongly in $\mathcal{H}$.	
\end{enumerate}
\end{remark}

\begin{proof}
	Let $c = \inf_{\mathcal{S}_{\lambda}} \varphi$ and set $\epsilon_n = \max\left\{ \frac{1}{n}, \varphi(f_n) - c\right\}$. By Ekeland's variational principle there exists a sequence $(g_n)_{n\in\N}\subset \mathcal{S}_{\lambda}$ such that $\varphi(g_n) \leq \varphi(f_n)$ for all $n\in \N$, $\|g_n - f_n\| \to 0$ as $n\to\infty$, and
	\begin{align}\label{eq:ekeland}
		\varphi(g_n) < \varphi(u) + \sqrt{\epsilon_n} \, \|g_n - u\| \quad \text{for all} \quad u \neq g_n.
	\end{align}
	Now let $\gamma:(-1,1) \to \mathcal{S}_{\lambda}$ be a $\mathcal{C}^1$ curve with $\gamma(0) = g_n$ and $\gamma'(0) = h_n$, for some arbitrary $h_n \in T_{g_n}\mathcal{S}_{\lambda}$.
	Then, by means of the continuity of $h\mapsto D_h \varphi(f)$ for all $f\in\mathcal{H}$, we have
	\begin{align*}
		\lim_{t\to 0} \frac{\varphi(\gamma(t)) - \varphi(\gamma(0))}{t} &= \lim_{t\to 0} \frac{\varphi(\gamma(0) + t \gamma'(0) + {o}(t)) - \varphi(\gamma(0))}{t} \\
		&= \lim_{t\to 0} \frac{\varphi(\gamma(0)) + t D_{\gamma'(0) + t^{-1}{o}(t)} \varphi(\gamma(0)) + o(t) - \varphi(\gamma(0))}{t} \\
		&= \lim_{t\to 0} D_{\gamma'(0)+ t^{-1}o(t)}\varphi(\gamma(0)) = D_{\gamma'(0)} \varphi(\gamma(0)) = D_{h_n}\varphi(g_n),
	\end{align*}
	As the curve $\gamma$ was arbitrary, this implies
	\begin{align*}
		(D_{h_n}  \varphi|_{S_{\lambda}})(g_n) = \lim_{t\to 0} \frac{\varphi(\gamma(t)) - \varphi(\gamma(0))}{t}.
	\end{align*}
	By \eqref{eq:ekeland}, for all $t>0$ we have
	\begin{align*}
		\varphi(\gamma(t))-\varphi(\gamma(0))> -\sqrt{\epsilon_n} \|\gamma(0) - \gamma(t)\|,
	\end{align*}
	and dividing by $t>0$ and letting $t\to 0$ yields
	\begin{align*}
		(D_{h_n}  \varphi|_{S_{\lambda}})(g_n) = \lim_{t\downarrow 0} \frac{\varphi(\gamma(t)) - \varphi(\gamma(0))}{t} \geq -\sqrt{\epsilon_n}   \|\gamma'(0)\| = - \sqrt{\epsilon_n}\|h_n\|
	\end{align*}
	Similarly, exchanging $t$ by $-t$, one obtains
	\begin{align*}
		(D_{h_n}  \varphi|_{S_{\lambda}})(g_n) = \lim_{t\downarrow 0} \frac{\varphi(\gamma(-t)) - \varphi(\gamma(0))}{-t} \leq \sqrt{\epsilon_n}  \|\gamma'(0)\| = \sqrt{\epsilon_n}\|h_n\|,
	\end{align*}
	and therefore
	\begin{align*}
		\left|(D_{h_n}  \varphi|_{S_{\lambda}})(g_n)\right| \leq \sqrt{\epsilon_n}\|h_n\| \to 0 \quad \text{as} \quad n \to \infty.
	\end{align*}
\end{proof}
\section*{Acknowledgements}
\noindent
Y.-R.L.\ and V.Z.\ thank the Department of Mathematics at KIT, D.H.\ thanks the Department of Mathematics at Sogang University, and T.R.\ thanks the Department of Mathematics at the University of Illinois at Urbana-Champaign and the School of Mathematics at Georgia Institute of Technology for their warm hospitality.

D.H.\ and T.R.\ gratefully acknowledge financial support by the Deutsche Forschungsgemeinschaft (DFG) through CRC 1173 `Wave Phenomena'.
D.H.\ also thanks the Alfried Krupp von Bohlen und Halbach Foundation for financial support.
Y.-R.L.\  thanks the National Research Foundation of Korea (NRF) for financial support funded by the Ministry of Education (No. 2014R1A1A2058848).
V.Z.\  thanks Simons foundation for partial support (\#278840 to Vadim Zharnitsky). \\

\bigskip
\end{document}